\newtheorem{assumption}{Assumption}
\newtheorem{definition}{Definition}
\newcommand{\assref}[1]{Assumption~\ref{ass:#1}}
\DeclareMathOperator{\conv}{Conv}
\def\ug{\bu_\gamma}
\def\etad{\eta_{\ast}}
\def\uhat{\hat{\bu}}
\def\u0hat{\hat{u_0}}
\def\fg{\bff_\gamma}
\def\alphahat{\hat{\bsalpha}}
\def\alphahatu{\alphahat(\bu)}
\def\alphahatg{\alphahat_\gamma}
\def\alphahatgu{\alphahatg(\bu)}
\def\hg{h_\gamma}
\def\jg{j_\gamma}
\def\rg{\br_\gamma}
\def\Ga{G_{\bsalpha}}
\def\Gb{G_{\bsbeta}}
\def\Gu{G_{\alphahatu}}
\def\Ggu{G_{\alphahatgu}}
\begin{document}



\title{On the convergence of the regularized entropy-based moment method for 
kinetic equations}

\author{Graham W.\ Alldredge\thanks{The first author was supported in part by by 
the German Science  Foundation (DFG), project ID AL 2030/1-1.} \\
Berlin, Germany \\
\texttt{gwak@posteo.net}
\and
Martin Frank \\
Department of Mathematics \\
Karlsruhe Institute of Technology \\
Karlsruhe, Germany \\
\texttt{martin.frank@kit.edu}
\and
Jan Giesselmann\thanks{The third author is grateful for financial support  
by the German Science Foundation (DFG) via grant TRR 154 (Mathematical 
modelling, simulation and optimization using the example of gas networks) 
project C05.} \\
Department of Mathematics,
Technical University of Darmstadt \\
Darmstadt, Germany \\
\texttt{giesselmann@mathematik.tu-darmstadt.de}
}

\maketitle

\begin{abstract}
The entropy-based moment method is a well-known discretization for the velocity 
variable in kinetic equations which has many desirable theoretical properties 
but is difficult to implement with high-order numerical methods. 
The regularized entropy-based moment method was recently introduced to remove 
one of the main challenges in the implementation of the entropy-based moment 
method, namely the requirement of the realizability of the numerical solution.
In this work we use the method of relative entropy to prove the convergence of 
the regularized method to the original method as the regularization parameter 
goes to zero and give convergence rates.
Our main assumptions are the boundedness of the velocity domain and that the 
original moment solution is Lipschitz continuous in space and bounded away from 
the boundary of realizability.
We provide results from numerical simulations showing that the convergence 
rates we prove are optimal.
\end{abstract}

\section{Introduction}

Kinetic equations model systems consisting of  large numbers of particles that
interact with each other or with a background medium and arise in a wide 
variety of applications including
rarefied gas dynamics \cite{Cercignani},
neutron transport \cite{Lewis-Miller-1984},
radiative transport \cite{mihalas1999foundations},
and semiconductors \cite{markowich1990}.
The numerical solution of kinetic equations remains an area of active research.
In this work, we consider the entropy-based moment method \cite{Lev96}, which 
is a discretization of the velocity variable in the kinetic equation.
It has many desirable theoretical properties but is computationally expensive 
and challenging to implement.

Recently in \cite{AllFraHau19} a regularized version of the entropy-based 
moment equations was proposed to simplify the implementation of numerical 
methods for the entropy-based moment equations.
These regularized entropy-based moment equations require the selection of a 
regularization parameter, and the authors in \cite{AllFraHau19} proposed a 
rule for selecting the regularization parameter so that the error introduced by 
the regularization was of the order of the error in the spatiotemporal 
discretization.
With this selection rule the authors produced numerical results which 
showed that the regularized equations could be used to compute accurate results 
of the original entropy-based moment equations.

In this work, we prove that exact solutions of the regularized entropy-based 
moment equations converge to the solutions of the original equations.
We quantify the difference between these two solutions using the relative entropy, 
and the convergence rate is quadratic in the regularization parameter.
Under some assumptions, this is equivalent to linear convergence in the 
regularization parameter in the $L^2$ norm.

To obtain our results we require the following main assumptions:
\begin{enumerate}[(i)]
 \item the set of velocities of the kinetic equation is bounded;
 \item periodic boundary conditions;
 \item Lipschitz continuity of the function defining the collision term in the 
  original entropy-based moment equations;
 \item the solution of the original entropy-based moment equations is both
  Lipschitz continuous in space and bounded away from the boundary of the set 
  of realizable moment vectors.
\end{enumerate}

The relative-entropy techniques we use are very similar to what was done in 
\cite{BerthelinVasseur2005,BerthelinTzavarasVasseur2009,
GiesselmannTzavaras2017,Tzavaras2005}.
In general, relative entropy estimates are a widely applicable tool for 
comparing thermomechanical theories having the form of hyperbolic balance laws 
that are endowed with a strictly convex entropy 
\cite{GiesselmannLattanzioTzavaras2017}.
A general limitation of this methodology is that it requires the solution to the 
limiting system to be Lipschitz continuous - this property can (usually) only be 
expected for short times since shocks may form.
There is recent progress in overcoming this limitation, at least in one space 
dimension, by using the relative entropy with shifts methodology that was 
developed by  Vasseur and co-workers \cite{Krupa2019,SerreVasseur2016}. 
However, this condition can most probably not be removed in  two or more space 
dimensions since it is connected to non-uniqueness of entropy solutions 
to hyperbolic balance laws.

To present our results, we first introduce the entropy-based moment equations 
and their regularized version in \secref{equations} and precisely state our 
assumptions.
Then in \secref{main-results} introduce the technique of relative entropy and 
give a general version of our main result.
We prove the estimates upon which our main result relies and give the subsets 
of the realizable set on which they hold in \secref{estimates}.
Next we present the results of numerical experiments confirming the 
theoretically predicted rates of convergence in \secref{num-results}, and 
finally we draw conclusions and discuss directions for future work in 
\secref{conc}.

\section{Entropy-based moment equations and regularization}
\label{sec:equations}

\subsection{The kinetic equation}

Kinetic equations evolve the \textit{kinetic density function}
${f \colon [0, \infty) \times X \times V \to [0, \infty)}$ according to
\begin{equation}\label{eq:kinetic}
 \partial_t f(t, x, v) + v \cdot \nabla_x f(t, x, v) = \cC(f(t, x, \cdot))(v)
\end{equation}
(when neglecting long-range interactions).
The function $f$ depends on time $t \in [0, \infty)$, position
${x \in X \subseteq \R^d}$, and a velocity variable $v \in V \subseteq \R^d$.
The operator $\cC$ introduces the effects of particle collisions; at each $x$ 
and $t$, it 
is an integral operator in $v$.
In order to be well-posed, \eqref{eq:kinetic} must be accompanied by appropriate
initial and boundary conditions.

The results in this work depend strongly on the following assumption.
\begin{assumption}\label{ass:v-bnd}
The set of velocities $V$ is bounded.
\end{assumption}

We will see the crucial consequences of this assumption in the next subsection.
For any $g \in L^1(V)$ we use the notation
\begin{align}
 \Vint{g} := \int_V g(v) \intdv.
\end{align}
We define
\begin{align}
 |V| := \Vint{1} = \int_V \intdv\,.
\end{align}
Many of the constants below depend on $\max_{v \in V} \|v\|$, though for 
clarity of exposition we do not give this dependence explicitly.

We consider kinetic equations where the collision operator satisfies an 
entropy-dissipation property:
Let $\eta : D \to \R$, where $D \subseteq \R$, be a strictly convex function 
and $\bbF(V) := \{ g \in L^1(V) : \range(g) \subseteq D \}$.
We call $\eta$ the \emph{kinetic entropy function}.
Then the local entropy $\cH : \bbF(V) \to \R$ is given by
\begin{align}\label{eq:cH}
 \cH(f) := \Vint{\eta(f)}.
\end{align}
This entropy is dissipated if the collision operator $\cC$ satisfies
\begin{align}\label{eq:entropy-diss-kin}
 \Vint{\eta'(g) \cC(g)} \le 0
\end{align}
for all $g : V \to D$ such that the integral is defined.
Furthermore, we assume that $\eta$ is sufficiently smooth:

\begin{assumption}\label{ass:smooth-eta}
The kinetic entropy satisfies $\eta \in C^3(D)$, $\eta'' > 0$ on $D$, and 
\eqref{eq:entropy-diss-kin}, i.e., $\eta$ is an entropy dissipated by the 
kinetic equation \eqref{eq:kinetic}.
\end{assumption}

\subsection{The original entropy-based moment equations and realizability}

The original entropy-based moment equations are a semidiscretization of the 
kinetic equation \eqref{eq:kinetic} in the velocity variable $v$.
For an overview, see \cite{Lev96}.
The velocity dependence of $f$ at each point in time and space is replaced by 
the vector of moments
\begin{align}
 \bu(t, x) := (u_0(t, x), u_1(t, x), \ldots , u_N(t, x)) \in \R^{N + 1},
\end{align}
which contains the approximations of velocity integrals of $f$ multiplied by 
the basis functions
\begin{align}
 \bm(v) := (m_0(v), m_1(v), \ldots , m_N(v)),
\end{align}
that is, ${u_i(t, x) \simeq \vint{m_i f(t, x, \cdot)}}$ for all
${i \in \{ 0, 1, \ldots , N \}}$.
Usually the basis functions are polynomials.
We make the following assumptions on the basis functions:

\begin{assumption}\label{ass:basis-functions}
\begin{enumerate}[(i)]
 \item For every $i \in \{0, 1, \ldots , N\}$ we have $m_i \in L^\infty(V)$.
  Without loss of generality, we assume that $\|m_i\|_{L^\infty(V)}$ is bounded 
  by one:
  \begin{align}\label{eq:m-bnd}
   \|m_i\|_{L^\infty(V)} = \sup_{v \in V} |m_i(v)| \le 1
    \qquad
   \text{for } i \in \{0, 1, \ldots , N \}.
  \end{align}
 \item The constant function is in the linear span of the basis functions.
  Without loss of generality we assume $m_0(v) \equiv 1$.
\end{enumerate}
\end{assumption}

For each moment vector, the entropy-based moment method reconstructs an ansatz 
for the the kinetic density by solving the constrained optimization problem
\begin{equation}\label{eq:primal}
 \minimize_{g \in \bbF(V)} \: \cH(g)
  \qquad \st \: \Vint{\bm g} = \bu
\end{equation}
(recall the definition of $\cH$ in \eqref{eq:cH}).
Under \assref{v-bnd}, this problem has a unique solution for every $\bu \in \cR$
\cite{Jun00}, where
\begin{align}
 \cR := \left\{ \bu : \text{ there exists a } g \in \bbF(V)
  \text{ such that } \Vint{\bm g} = \bu \right\}
\end{align}
is the set of all \emph{realizable} moment vectors, and consequently, the 
system of moment equations is well-defined and hyperbolic for all realizable 
moment vectors.%
\footnote{
When $V$ is not bounded, such as in entropy-based moment equations for the 
Boltzmann equation for rarefied gas dynamics, where $V = \R^3$, there are 
important examples of realizable moment vectors for which the primal problem 
has no solution; see \cite{Junk-1998,Jun00,Hauck-Levermore-Tits-2008}.
This is a significant open problem for entropy-based moment equations, and 
neither the regularization nor our work here can get around this issue.
}

The solution to \eqref{eq:primal} takes the form $\Gu$, where
\begin{align}\label{eq:ansatz}
\Ga := \eta'_*(\bsalpha \cdot \bm)
\end{align}
and $\alphahat: \cR \to \R^{N + 1}$ maps a moment vector $\bu$ to the solution 
of the dual problem 
\begin{equation}\label{eq:dual}
\alphahat(\bu) = \argmax_{\bsalpha \in \R^{N + 1}} 
 \left\{ \bsalpha \cdot \bu - \Vint{\etad(\bsalpha \cdot \bm)}\right\};
\end{equation} 
here $\etad$ is the Legendre dual%
\footnote{
See, e.g., \cite[\S 3.3.2.]{evans2010partial} or \cite[\S 3.3]{boyd2004convex}, 
where what we call the Legendre dual is called the conjugate function.
}
of $\eta$ (and thus $\etad'$ is the inverse function of $\eta'$).
The components of $\bsalpha$ are the Lagrange multipliers for the primal 
problem \eqref{eq:primal}.

Now we are ready to give the entropy-based moment equations:
\begin{subequations}\label{eq:mn}
\begin{align}
 \partial_t \bu + \nabla_x \cdot \bff(\bu) &= \br(\bu),
\end{align}
where the flux function $\bff$ and relaxation term $\br$ are given by
\begin{align}\label{eq:f-and-r}
 \bff(\bu) := \Vint{v \bm \Gu} \qquand
 \br(\bu) := \Vint{\bm \cC(\Gu)}.
\end{align}
\end{subequations}
Classical solutions of the moment equations \eqref{eq:mn} satisfy the entropy 
dissipation law
\begin{align}\label{eq:entropy-diss-mn}
 \partial_t h(\bu) + \nabla_x \cdot j(\bu) = h'(\bu) \cdot \br(\bu) \le 0
\end{align}
for the entropy and entropy flux
\begin{equation}\label{eq:entropy-entropy-flux}
 h(\bu) := \Vint{\eta(\Gu)}  \quand
 j(\bu) := \Vint{v \eta(\Gu)}.
\end{equation}
Note that $h' = \alphahat$, and that $h$ is strictly convex as a consequence of 
\assref{smooth-eta} \cite{Hauck-Levermore-Tits-2008}.
For readers unfamiliar with the derivations of the dual problem and the entropy 
dissipation law and related properties of the entropy-based moment equations, 
we review these in Appendix \ref{sec:mn-review}.

We make the following assumption on the collision term:

\begin{assumption}\label{ass:r-lip}
The function $\br$ in the collision term of the original entropy-based moment 
equations \eqref{eq:mn} is Lipschitz continuous with constant $C_{\br}$ and 
satisfies $\lim_{\bu \rightarrow 0} \br(\bu) = 0$.
\end{assumption}

This applies, for example, to linear collision operators like in the case of 
isotropic scattering.
This assumption cannot be expected to apply for the Boltzmann collision 
operator, at least not globally, since it is quadratic.
The assumption $\lim_{\bu \rightarrow 0} \br(\bu) = 0$ is natural since 
otherwise there would be collision effects in the moment equations when no 
particles are present.

Notice that the flux $\bff$ and the source $\br$ in \eqref{eq:mn} can only be
defined when the optimization problem \eqref{eq:primal} is feasible, i.e., 
when the $\bu \in \cR$.
When $D = [0, \infty)$, this set corresponds to the set of vectors which 
contain the moments of a nonnegative density, which are indeed the only moment 
vectors we want to consider since the kinetic density $f$ should be nonnegative.
This theoretical advantage is however impractical for implementations because 
it is in general difficult to keep a high-order numerical solution of 
\eqref{eq:mn} within the realizable set $\cR$.

Finally, we note that a consequence of \assref{v-bnd} is that $\alphahat$ is a 
smooth bijection from $\cR$ to $\R^{N + 1}$.
Its inverse is the function which gives the moment vector of the entropy ansatz 
corresponding to a given multiplier vector:
\begin{align}
 \uhat(\bsalpha) := \Vint{\bm \etad'(\bsalpha \cdot \bm)}.
\end{align}
This function plays a role in the analysis later.

\subsection{The regularized entropy-based moment equations}

To work around the problem of realizability, the regularized entropy-based 
moment equations were proposed in \cite{AllFraHau19}.
Let $\gamma \in (0, \infty)$ be the regularization parameter.
Then the regularized entropy-based moment equations are given by
\begin{subequations}
\label{eq:reg-mn}
\begin{align}
\partial_t \bu + \nabla_x \cdot \bff_\gamma(\bu) &= \br_\gamma(\bu),
\end{align}
where
\begin{align}\label{eq:f-and-r-gam}
\bff_\gamma(\bu) := \Vint{v \bm \Ggu} \qquand
\br_\gamma(\bu) := \Vint{\bm \cC(\Ggu)}.
\end{align}
\end{subequations}
The ansatz $\Ggu$ has the same form as above (i.e.,
${\Ggu = \etad'(\alphahatgu \cdot \bm)}$) but is the solution of the 
unconstrained optimization problem
\begin{align}\label{eq:primal-reg}
\minimize_{g \in \bbF(V)} \: \Vint{\eta(g)} + \frac1{2\gamma}
 \left\| \Vint{\bm g} - \bu \right\|^2,
\end{align}
which is feasible for any moment vector $\bu \in \R^{N + 1}$ (again under
\assref{v-bnd}).
The new multiplier vector $\alphahatgu$ is the solution of the corresponding 
dual problem
\begin{align}\label{eq:reg-mult}
\alphahatgu := \argmax_{\bsalpha \in \R^{N + 1}} \left\{
\bsalpha \cdot \bu
- \Vint{\eta_*(\bsalpha \cdot \bm)}
- \frac\gamma2 \|\bsalpha\|^2 \right\}
\end{align}
and satisfies the first-order necessary conditions
\begin{align}\label{eq:1st-ord-necc}
 \bu = \uhat(\alphahatgu) + \gamma \alphahatgu.
\end{align}
Classical solutions of the regularized equations \eqref{eq:reg-mn} satisfy
\begin{align*}
 \partial_t \hg(\bu) + \nabla_x \cdot \jg(\bu)
  = \hg'(\bu) \cdot \br_\gamma(\bu) \le 0.
\end{align*}
where
\begin{equation}\label{eq:hg-jg}
 \hg(\bu) := \Vint{\eta(\Ggu)}
   + \frac1{2\gamma} \left\| \Vint{\bm \Ggu} - \bu \right\|^2
  \qquand
 \jg(\bu) := \Vint{v \eta(\Ggu)}.
\end{equation}
Analogously to the original case, we have $\hg' = \alphahatg$.
In this work, we are mostly concerned with entropy solutions of the regularized equations, i.e. weak solutions of \eqref{eq:reg-mn} satisfying the admissibility criterion
\begin{align}\label{eq:entropy-diss-rmn}
 \partial_t \hg(\bu) + \nabla_x \cdot \jg(\bu)
  \leq \hg'(\bu) \cdot \br_\gamma(\bu) .
\end{align}
Note that any Lipschitz continuous solution of \eqref{eq:reg-mn} is automatically an entropy solution.

Like $h$, the entropy $\hg$ of the regularized equations is convex, and its 
Legendre dual $(\hg)_*$ will prove to be useful in the analysis.
The Legendre dual $(\hg)_*$ and its first and second derivatives are given by 
\cite{AllFraHau19}
\begin{subequations}
\begin{align}
 (\hg)_*(\bsalpha) &= \Vint{\etad(\bsalpha \cdot \bm)}
  + \frac \gamma 2 \|\bsalpha\|^2, \\
 (\hg)'_*(\bsalpha) &= \Vint{\bm \etad'(\bsalpha \cdot \bm)}
  + \gamma \bsalpha, \text{ and} \\
 (\hg)''_*(\bsalpha) &= \Vint{\bm \bm^T \etad''(\bsalpha \cdot \bm)}
  + \gamma I, \label{eq:hg-hess}
\end{align}
\end{subequations}
where $I$ is the $(N + 1) \times (N + 1)$ identity matrix.
Note that $(\hg)'_* \circ \hg' = (\hg)'_* \circ \alphahatg = \id$, so we also 
have $\hg'' = ((\hg)''_* \circ \alphahatg)^{-1}$, where the inverse indicates 
the matrix inverse.
One also immediately recognizes from the form of $(\hg)''_*$ that $(\hg)_*$ and 
thus $\hg$ are strictly convex for any $\gamma > 0$.

\section{Relative entropy for convergence}
\label{sec:main-results}

Let $X \subset \R^d$ be a $d$-cube and $T \in (0, \infty)$.
We consider the initial-value problem
\begin{subequations}\label{eq:moment-eq}
\begin{align}
 \partial_t \bu + \nabla_x \cdot \bff(\bu) &= \br(\bu)
  & (t, x) &\in (0, T] \times X, \\
 \bu(0, x) &= \bu^0(x) & x &\in X,
\end{align}
\end{subequations}
where $\bu^0$ are the given initial conditions, and we use periodic boundary 
conditions in space.
For any $\gamma \in (0, \infty)$, the regularized moment equations are
\begin{subequations}\label{eq:reg-moment-eq}
\begin{align}
 \partial_t \ug + \nabla_x \cdot \fg(\ug) &= \rg(\ug)
  & (t, x) &\in (0, T] \times X, \\
 \ug(0, x) &= \bu^0(x) & x &\in X,
\end{align}
\end{subequations}
and we also take periodic boundary conditions.
Notice that the initial conditions are the same.

Following Dafermos \cite{Dafermos2016}, we introduce the relative entropy and 
relative entropy flux relative to $h_\gamma$:
\begin{definition}
Given moments $\ug, \, \bu \in \R^{N+1}$ the \emph{relative entropy} and 
\emph{relative entropy flux} are given by
\begin{align}
 \hg(\ug | \bu) &:= \hg(\ug) - \hg(\bu) - \alphahatgu \cdot (\ug - \bu)
  \text{ and} \label{eq:rel-hg} \\
 \jg(\ug | \bu) &:= \jg(\ug) - \jg(\bu)
  - \alphahatgu \cdot (\fg(\ug) - \fg(\bu)),
\end{align}
respectively.
\end{definition}
Note that using the relative entropy with respect to $h_\gamma$ allows us (in 
principle) to use non-realizable moment vectors in both arguments of the 
relative entropy.
For our subsequent convergence result, however, there will be a strong 
difference between the first and the second slot of the relative entropy.
In particular, in our convergence results, Corollaries \ref{cor:super} and
\ref{cor:sub}, we will require the function in the second slot of the relative 
entropy 
to have values only in some compact subset of the set of realizable vectors.

\begin{lemma}
 Let $\bu$ be a Lipschitz continuous solution of \eqref{eq:moment-eq} and let 
$\ug$ be an entropy solution of 
\eqref{eq:reg-moment-eq}, i.e. $\ug \in L^\infty([0,T) \times X, \R^{N+1})$ satisfies 
 \begin{align*}
\partial_t \ug + \nabla_x \cdot \bff_\gamma(\ug) &= \br_\gamma(\ug),\\
 \partial_t \hg(\ug) + \nabla_x \cdot \jg(\ug)
  &\leq \hg'(\ug) \cdot \br_\gamma(\ug)
\end{align*}
in the sense of distributions.
Then, for almost all $0 \leq t \leq T$ the following 
inequality holds:
 \begin{multline}\label{eq:relent}
  \int_X\hg(\ug(t,x) | \bu(t,x)) \, dx  \leq 
   - \int_0^t \int_X (\nabla_x \alphahatgu(s,x)) : \fg(\ug(s,x)| 
\bu(s,x))\\ - q_\gamma(\ug(s,x), \bu(s,x)) 
  - J_\gamma(\ug(s,x), \bu(s,x)) : \nabla_x \bu(s,x) dx ds
 \end{multline}
with
\begin{align}
 \fg(\ug| \bu) := \fg(\ug) - \fg(\bu) - \fg'(\bu)(\ug - \bu)
\end{align}
and
\begin{align}
 q_\gamma(\ug, \bu) := (\hg'(\ug) - \hg'(\bu)) \cdot \rg(\ug)
  - \br(\bu) \cdot (\hg''(\bu)(\ug - \bu))
\end{align}
and
\begin{equation}
 J_\gamma (\ug,\bu):=(\ug - \bu) \hg''(\bu) (\bff'(\bu) - \fg'(\bu)) 
\end{equation}

\end{lemma}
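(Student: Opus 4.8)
The plan is to derive a relative-entropy inequality by combining the weak formulations of the two systems together with the entropy-admissibility inequality for $\ug$, in the classical Dafermos--style manner. First I would write down the three distributional identities available to me: the balance law for $\ug$, the (classical) balance law for $\bu$, and the entropy inequality $\partial_t \hg(\ug) + \nabla_x \cdot \jg(\ug) \le \hg'(\ug) \cdot \rg(\ug)$. Since $\bu$ is Lipschitz, I may freely use it and $\alphahatgu = \hg'(\bu)$ as a test function (after the usual mollification in time to handle the lack of time-regularity of $\ug$, and taking a limit). The goal is to assemble $\hg(\ug \mid \bu) = \hg(\ug) - \hg(\bu) - \hg'(\bu)\cdot(\ug-\bu)$ under the time derivative and track every term that appears.

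Concretely, the key steps are: (1) integrate the entropy inequality for $\ug$ in space and time to get $\int_X \hg(\ug)\,dx$ controlled by $-\int\!\!\int \hg'(\ug)\cdot\rg(\ug)$ minus a flux term that vanishes by periodicity; (2) test the $\ug$-equation with $-\hg'(\bu)$ and the $\bu$-equation with $-\hg'(\bu)$ appropriately, and also differentiate $\hg(\bu)$ and $\hg'(\bu)\cdot(\ug-\bu)$ in time using the chain rule and the $\bu$-equation, so that $\partial_t[\hg(\bu) + \hg'(\bu)\cdot(\ug-\bu)]$ expands into terms involving $\hg''(\bu)\partial_t\bu = -\hg''(\bu)\nabla_x\cdot\bff(\bu) + \hg''(\bu)\br(\bu)$; (3) collect the spatial-flux contributions. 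Here one uses the identity $\nabla_x \cdot \jg(\ug) - \nabla_x \cdot \jg(\bu) - \hg'(\bu)\cdot\nabla_x\cdot(\fg(\ug)-\fg(\bu)) = \nabla_x\cdot \jg(\ug\mid\bu) + (\nabla_x \hg'(\bu)):(\fg(\ug)-\fg(\bu))$ together with the relative-flux algebra $\fg(\ug) - \fg(\bu) - \fg'(\bu)(\ug-\bu) = \fg(\ug\mid\bu)$, so that after an integration by parts the divergence-form pieces drop by periodicity and what remains is $-(\nabla_x\alphahatgu):\fg(\ug\mid\bu)$ plus a cross term $(\nabla_x\hg'(\bu)):\fg'(\bu)(\ug-\bu)$ that must be matched against the $\hg''(\bu)\nabla_x\cdot\bff(\bu)$ contribution from step (2); the mismatch between $\bff'$ and $\fg'$ is exactly what produces $J_\gamma(\ug,\bu):\nabla_x\bu$. (4) Finally, the source terms regroup: $-\hg'(\ug)\cdot\rg(\ug) + \hg'(\bu)\cdot\rg(\ug)$ from the entropy inequality and the $\ug$-equation tested against $-\hg'(\bu)$, plus $-\br(\bu)\cdot\hg''(\bu)(\ug-\bu)$ from differentiating the linear term, assemble into $-q_\gamma(\ug,\bu)$.

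The main obstacle I expect is twofold. First, the low regularity of $\ug$: it is only $L^\infty$ and a distributional solution, so the manipulations in step (2)--(3), which implicitly treat $\hg(\ug)$, $\hg'(\ug)$, etc.\ as if they were differentiable, must be justified by mollifying in time, writing everything in the integrated weak form against Lipschitz test functions built from $\bu$, and passing to the limit; the entropy inequality is used only in its distributional form and never differentiated. One has to be careful that $\hg$ and its derivatives are smooth and that $\ug$ takes values in a region where they are locally bounded — this is where $\ug \in L^\infty$ is used. Second, the bookkeeping in step (3) of separating the purely divergence-form terms (killed by periodicity) from the genuinely quadratic relative terms, and correctly identifying that the leftover first-order-in-$(\ug-\bu)$ pieces cancel up to the $J_\gamma$ term — this is the delicate algebra that makes the final inequality have precisely the stated right-hand side. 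Once the identity version (with equality, for smooth solutions) is in hand, replacing the entropy balance of $\ug$ by the entropy \emph{inequality} only weakens the estimate in the correct direction, giving \eqref{eq:relent}.

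I would then record the resulting inequality in the integrated form, noting that all boundary terms in $x$ vanish by the periodic boundary conditions, and that the time-mollification limit is licit because the right-hand side depends continuously (in the appropriate weak topology) on $\ug$ through $\fg(\ug\mid\bu)$, $\rg(\ug)$, and $\hg'(\ug)$, all of which are continuous functions of $\ug$ composed with an $L^\infty$ field; this yields the claimed bound for almost every $t \in [0,T]$.
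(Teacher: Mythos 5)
Your proposal is correct and follows essentially the same route as the paper: the Dafermos-style relative-entropy computation, with the flux mismatch between $\bff'$ and $\fg'$ generating $J_\gamma$, the source terms regrouping into $q_\gamma$, periodicity killing the divergence terms, and a time-mollified test function yielding the bound at Lebesgue points of $t \mapsto \int_X \hg(\ug|\bu)\,dx$. The only detail worth stating explicitly is that $\int_X \hg(\ug(0,\cdot)\,|\,\bu(0,\cdot))\,dx = 0$ because the two solutions share the same initial data, so no initial term survives on the right-hand side.
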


\begin{proof}
The proof follows the proof of \cite[Thm 5.2.1]{Dafermos2016}. We need, 
however, to account for the difference of flux functions in 
\eqref{eq:reg-moment-eq} and \eqref{eq:moment-eq}. Therefore, we provide a brief 
proof that is to be understood in the dual space of
$W^{1,\infty}_0([0,T) \times X, [0,\infty))$, i.e. nonnegative Lipschitz 
continuous functions with compact support. 
\begin{subequations}
\begin{align}
 &\partial_t \hg(\ug | \bu) + \nabla_x \cdot \jg(\ug | \bu) \nonumber \\
 &= \partial_t \hg(\ug) + \nabla_x \cdot \jg(\ug)
  - h_\gamma'(\bu) \cdot \partial_t \bu - \jg'(\bu) : \nabla_x \bu
  - (\hg''(\bu) \partial_t \bu ) \cdot (\ug - \bu) \nonumber \\
 &\qquad - (\hg''(\bu) \cdot \nabla_x \bu ) \cdot (\fg(\ug) - \fg(\bu))
  - \alphahatgu \cdot (\partial_t \ug + \nabla_x\cdot \fg(\ug))
  \nonumber \\
 &\qquad + \alphahatgu \cdot (\partial_t \bu + \fg'(\bu) : \nabla_x \bu) \\
 &\leq \hg'(\ug) r_\gamma(\ug)  
 - \hg''(\bu)(- \bff'(\bu) : \nabla_x \bu + \br(\bu))\cdot (\ug - \bu)
 \nonumber \\
 &
 \qquad- (\hg''(\bu) \cdot \nabla_x \bu ) \cdot (\fg(\ug) - \fg(\bu))
 -\hg'(\bu) \br_\gamma(\ug)
 \label{eq:step-ineq}
 \\
 &=  
 -(\nabla_x \alphahatgu) : \fg(\ug | \bu) + q_\gamma(\ug, \bu) 
  + J_\gamma(\ug, \bu) : \nabla_x \bu,
\end{align}
\end{subequations}
where in \eqref{eq:step-ineq} we have used
\begin{subequations}
\begin{align}
&\partial_t \hg(\ug) + \nabla_x \cdot \jg(\ug) \leq  \hg'(\ug) \cdot 
 \br_\gamma(\ug), \\
 & -h_\gamma'(\bu) \cdot \partial_t \bu - \jg'(\bu) : \nabla_x \bu + 
 \alphahatgu \cdot (\partial_t \bu + \fg'(\bu) :\nabla_x \bu)=0, \\
 & \partial_t \bu = - \bff'(\bu) : \nabla_x \bu + \br(\bu), \text{ and} \\
 & \alphahatgu \cdot (\partial_t \ug + \nabla_x \cdot \fg(\ug)) = 
 \hg'(\bu) \br_\gamma(\ug).
\end{align}
\end{subequations}
We have also used the commutation property
\[ \hg'' \fg' = (\fg')^T \hg''\]
that follows by taking the derivative of the compatibility relation $\hg' \fg'  
= j_\gamma'$.



Now we fix $ 0 \leq t \leq T$ and for $\varepsilon>0$ we test 
\begin{equation}
 \partial_t \hg(\ug | \bu) + \nabla_x \cdot \jg(\ug | \bu) \nonumber
 \leq -(\nabla_x \alphahatgu) : \fg(\ug | \bu) + q_\gamma(\ug, \bu) 
  + J_\gamma(\ug, \bu) : \nabla_x\bu,
\end{equation}
with $ \psi_\varepsilon \in W_0^{1,\infty}([0,T) \times X, [0,\infty))$
\[ \psi_\varepsilon(s,x) := \left\{ \begin{array}{ccc}  1 & : &s < t \\
1 - \frac{s-t}{\varepsilon} & : & t < s < t+\varepsilon\\
0 &:& s > t+ \varepsilon
\end{array}  \right.\]
Sending $\varepsilon$ to zero, we obtain \eqref{eq:relent} for all $t$ that are 
Lebesgue points of the map
$[0,T) \rightarrow \R$, ${t \mapsto \int_X \hg(\ug(t,x)|\bu(t,x)) \, dx}$.
Note that $\int_X \hg(\ug(0,x) | \bu(0,x)) \, dx=0$ since $\bu$, $\ug$ satisfy
the same initial condition.
\end{proof}

At this point, with relative-entropy methods it is typical to look to bound the 
integrand of the right-hand side using $\hg(\ug | \bu)$ so that  Gr\"onwall's 
inequality can be used.

It turns out that such bounds can be obtained if we make some assumption on 
the original solution $\bu$ which bounds it away from the boundary of the 
realizable set $\cR$.
The exact form this assumption takes depends on the entropy; we
will make this precise in Section \ref{sec:estimates}.

\begin{thm}\label{thm:main}
Let there be a subset ${\mathcal{S}}$ of the set of realizable vectors $\cR$ 
such that there exists $\gamma_0 >0$ and constants $C_J,C_{\bff},C_q>0$ and
$D_J,D_{\bff},D_q>0$ so that
\begin{equation}\label{eq:fs-bnd}
 \aligned
 \|J_\gamma(\ug, \bu)\| &\le C_{J} \hg(\ug | \bu) + D_{J} \gamma^2
   \\
 \|\fg(\ug | \bu)\| &\le C_{\bff} \hg(\ug | \bu) + D_{\bff} \gamma^2
   \\
 q_\gamma(\ug, \bu) &\le C_q \hg(\ug | \bu) + D_q \gamma^2 
 \endaligned
 \quad \forall \ug \in \R^{N+1}, \bu \in {\mathcal{S}}, \gamma \in 
(0,\gamma_0).
 \end{equation}
Let $\bu$ be a Lipschitz continuous solution of \eqref{eq:moment-eq} satisfying 
$\bu(t,x) \in {\mathcal{S}}$ for all
 $(t, x) \in [0, T] \times X$. Let $\{\ug\}_{\gamma \in (0,\gamma_0)}$ be a 
family of entropy solutions of 
\eqref{eq:reg-moment-eq}.
Furthermore assume that
${\|\nabla_x \bu\|_{L^\infty([0,T] \times X)} \leq C_u}$,
$ \|\nabla_x \alphahatgu\|_{L^\infty([0,T] \times X)} 
\le C_{\alphahat}$ uniformly in $\gamma$.
Then for $\gamma$ sufficiently small,
\begin{align}
 \int_X \hg(\ug(T, x) | \bu(T, x)) \intdx \le \exp(C T) D T \gamma^2,
\end{align}
where $C := C_{\alphahat} C_{\bff}+ C_u C_J + C_q$ and
${D := C_{\alphahat} D_{\bff}+ C_u D_J + D_q}$.
\end{thm}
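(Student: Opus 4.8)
The plan is to start from the relative-entropy inequality \eqref{eq:relent} established in the preceding lemma, which already gives, for almost every $t \in [0,T]$,
\begin{align*}
 \int_X \hg(\ug(t,x) | \bu(t,x)) \, dx
  \le - \int_0^t \int_X (\nabla_x \alphahatgu) : \fg(\ug | \bu)
   - q_\gamma(\ug, \bu) - J_\gamma(\ug, \bu) : \nabla_x \bu \, dx \, ds,
\end{align*}
with all integrands evaluated at $(s,x)$. Since $\bu(s,x) \in \mathcal{S}$ for all $(s,x)$ and $\gamma < \gamma_0$, I would apply the three structural bounds in \eqref{eq:fs-bnd} termwise, together with the uniform Lipschitz bounds $\|\nabla_x \bu\|_{L^\infty} \le C_u$ and $\|\nabla_x \alphahatgu\|_{L^\infty} \le C_{\alphahat}$. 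Using Cauchy--Schwarz / submultiplicativity of the relevant matrix norms on the contractions $(\nabla_x \alphahatgu) : \fg(\ug|\bu)$ and $J_\gamma(\ug,\bu) : \nabla_x \bu$, each of the three contributions to the integrand is controlled by a term of the form $(\text{const}) \hg(\ug|\bu) + (\text{const}) \gamma^2$; collecting constants yields
\begin{align*}
 \int_X \hg(\ug(t,x) | \bu(t,x)) \, dx
  \le \int_0^t \int_X \bigl( C\, \hg(\ug(s,x) | \bu(s,x)) + D \gamma^2 \bigr) \, dx \, ds,
\end{align*}
with $C = C_{\alphahat} C_{\bff} + C_u C_J + C_q$ and $D = C_{\alphahat} D_{\bff} + C_u D_J + D_q$ exactly as in the statement. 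Here I also use that $|X|$ is finite (it is a $d$-cube) so that $\int_0^t \int_X D\gamma^2 \, dx\, ds \le D |X| \gamma^2 t$; if the intended normalization is $|X| = 1$ this is just $D\gamma^2 t$, and otherwise $D$ absorbs $|X|$.

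Next I would set $G(t) := \int_X \hg(\ug(t,x) | \bu(t,x)) \, dx$, which is nonnegative (by convexity of $\hg$ for $\gamma > 0$, the relative entropy is nonnegative) and, by the lemma, satisfies $G(0) = 0$ since $\ug$ and $\bu$ share initial data. The inequality above reads $G(t) \le \int_0^t \bigl( C\, G(s) + D\gamma^2 \bigr) \, ds$ for a.e.\ $t$. An application of the integral form of Gr\"onwall's inequality then gives
\begin{align*}
 G(t) \le D\gamma^2 t \exp(Ct) \le D\gamma^2 T \exp(CT)
\end{align*}
for all such $t$, and in particular for $t = T$ (taking, if necessary, a sequence of Lebesgue points approaching $T$ and using that $G$ is defined a.e.; the essential supremum bound is what is actually asserted). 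This is precisely the claimed estimate.

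The step that requires the most care is the passage from the abstract inequality \eqref{eq:relent} to the clean differential-inequality form, specifically making the termwise norm estimates rigorous: one must be careful about which norms appear (the $:$ contraction of a gradient matrix with $\fg(\ug|\bu)$, and of $J_\gamma$ with $\nabla_x \bu$), ensure the constants $C_{\alphahat}, C_u$ are genuinely uniform in $\gamma$ as hypothesized, and confirm the signs so that the right-hand side is indeed bounded above by $C\,\hg(\ug|\bu) + D\gamma^2$ rather than merely its absolute value. The phrase ``for $\gamma$ sufficiently small'' presumably just enforces $\gamma < \gamma_0$ so that \eqref{eq:fs-bnd} applies; I would state that explicitly. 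Everything else --- nonnegativity of the relative entropy, vanishing of initial data, finiteness of $|X|$, and Gr\"onwall --- is routine. I do not expect any genuine obstacle beyond bookkeeping, since the three hard estimates in \eqref{eq:fs-bnd} are taken as hypotheses here and are deferred to Section \ref{sec:estimates}.
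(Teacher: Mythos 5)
Your proposal is correct and follows essentially the same route as the paper: insert the hypothesized bounds \eqref{eq:fs-bnd} into the relative-entropy inequality \eqref{eq:relent}, collect the constants into $C$ and $D$, and conclude with Gr\"onwall's lemma using that the relative entropy vanishes at $t=0$. Your additional remarks (the factor $|X|$ being absorbed into $D$ or normalized to one, the Lebesgue-point issue at $t=T$, and reading ``$\gamma$ sufficiently small'' as $\gamma<\gamma_0$) are sensible bookkeeping points that the paper's terser proof leaves implicit.
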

\begin{proof}
 Inserting \eqref{eq:fs-bnd} into \eqref{eq:relent} implies
 \begin{multline}
  \int_X\hg(\ug(t,x) | \bu(t,x)) \, dx 
   \\
   \leq \int_0^t \int_X (C_{\alphahat} C_{\bF}+ C_u C_J + C_q) \hg(\ug | \bu)
   +(C_{\alphahat} D_{\bff}+ C_u D_J + D_q) \gamma^2  dx ds.
 \end{multline}
 The assertion of the Theorem follows by applying Grönwall's lemma.
\end{proof}


\begin{remark}
It is important to note that we need to bound the norms of $\fg(\ug | \bu)$ 
and $J_\gamma(\ug, \bu)$ while it is sufficient to bound $q_\gamma(\ug , \bu)$ 
from above (no bound from below is needed).
This is due to the fact that, in \eqref{eq:relent}, $\fg(\ug | \bu)$  and 
$J_\gamma(\ug, \bu)$  are multiplied by $\nabla_x \alphahatgu$ and
$\nabla_x \bu$, respectively,  whose directions are unknown.
\end{remark}

\begin{remark}
We will work with sets $\cS$ such that the derivative
$\alphahatg'(\bu) = ((\hg)''_*(\alphahatgu))^{-1}$ is 
uniformly bounded for $\bu \in \cS$ and $\gamma \in (0, \gamma_0)$.
Thus $\nabla_x \alphahatgu \in L^\infty([0,T] \times X)$ follows from
$\nabla_x \bu \in L^\infty([0,T] \times X)$ and the chain rule, and
$\|\nabla_x \alphahatgu\|_{L^\infty([0,T] \times X)}$ is bounded as
$\gamma \to 0$.
\end{remark}

\section{Estimates}\label{sec:estimates}

In this section we give sets $\cS$ over which the flux and source terms can be 
bounded as required in \thmref{main}.

First, we give some basic properties of realizable moment vectors and the 
functions $\uhat$, $\bff$, and their regularized counterparts which will be 
used repeatedly when estimating the flux and source terms.

\subsection{Basic properties}\label{sec:basic}

For basis functions satisfying \assref{basis-functions}, the components of any 
realizable moment vector $\bu \in \cR$ satisfy
\begin{align}
 |u_i| = |\vint{m_i g}| \le \vint{g} = u_0,
  \qquad
 \text{for all } i \in \{0, 1, \ldots , N \},
\end{align}
thus there exists a constant $C_0 \in (0, \infty)$ such that
\begin{align}\label{eq:C0}
 \|\bu\| \le C_0 u_0 \qquad \text{for all } \bu \in \cR.
\end{align}

Under \assref{v-bnd} there is a $C_1 \in (0, \infty)$ such that
\begin{align}\label{eq:f-jacobian-bnd}
 \|\bff'(\bu)\| \le C_1
\end{align}
for all $\bu \in \cR$ \cite[Lemma 3.1]{AlldredgeSchneider2014}, so $\bff$ is 
globally Lipschitz continuous on $\overline{\cR}$.
The same bound holds globally for the regularized flux, i.e.,
\begin{align}\label{eq:fg-jacobian-bnd}
 \|\fg'(\bu)\| \le C_1
\end{align}
for all $\bu \in \R^{N + 1}$ (this also follows from the argument used in 
\cite[Lemma 3.1]{AlldredgeSchneider2014}).
Since $\bff(0) = 0$ (in the limit sense, since vector $0$ lies on the 
boundary of $\cR$), we have
\begin{align}\label{eq:f-u-bnd}
 \|\bff(\bu)\| \le C_1 \|\bu\|.
\end{align}

The map $\uhat \circ \alphahatg$ plays an important role in our analysis.
This map returns the realizable moment vector that the regularization uses to 
compute the flux and source terms.
It is globally Lipschitz continuous.%
\footnote{
Indeed, the singular values of its Jacobian are bounded by one:
Let $H := \vint{\bm \bm^T \etad''(\alphahatgu \cdot \bm)}$, and recall that it 
is symmetric positive definite.
Then $(\uhat \circ \alphahatg)'(\bu) = H (H + \gamma I)^{-1}$.
For any eigenvalue-eigenvector pair $(\lambda, \bc)$ of $H$, we have
\begin{align}
 (H + \gamma I)^{-1} H^2 (H + \gamma I)^{-1} \bc
  = \left(\frac{\lambda}{\lambda + \gamma} \right)^2 \bc,
\end{align}
from which we concluded that the singular values have the form
$\lambda / (\lambda + \gamma)$ and thus are bounded by one.
}
If we let $C_2$ be its Lipschitz constant, then in particular
\begin{align}\label{eq:C2}
 \|\uhat(\alphahatg(\bw))\| \le C_2 \|\bw\| + \|\uhat(\alphahatg(0))\|
\end{align}%
In all cases we consider, $\lim_{\gamma \to 0} \uhat(\alphahatg(0)) = 0$ (see 
Appendix~\ref{sec:uag0}), so we define
\begin{align}\label{eq:C3}
 C_3 := \sup_{\gamma \in (0, \gamma_0)} \|\uhat(\alphahatg(0))\|
\end{align}
and generally use
\begin{align}\label{eq:uhatag-bnd}
 \|\uhat(\alphahatg(\bw))\| \le C_2 \|\bw\| + C_3
\end{align}
for appropriate values of $\gamma$.
Furthermore, since $\fg = \bff \circ \uhat \circ \alphahatg$,%
\footnote{To see this, start from \eqref{eq:f-and-r-gam} and apply the fact 
that $\alphahat$ is the inverse function of $\uhat$ to get
\begin{align}
 \fg(\bu) = \Vint{v \bm \Ggu} = \Vint{v \bm G_{\alphahat(\uhat(\alphahatgu))}}
  = \bff(\uhat(\alphahatgu));
\end{align}
here we have used $\alphahat \circ \uhat = \id$, and the last equality 
comes from the definition of $\bff(\bu)$ in 
\eqref{eq:f-and-r}.
}
we can combine this bound with \eqref{eq:f-u-bnd} to get
\begin{align}\label{eq:fg-bnd}
 \|\fg(\bw)\| \le C_1 \|\uhat(\alphahatg(\bw))\|
  \le C_1 (C_2 \|\bw\| + C_3).
\end{align}

We conclude this section by recalling a handy property of the regularized 
problem.
The partial derivative of $\alphahatg$ with respect to $\gamma$ was computed in 
\cite[Thm 3]{AllFraHau19}:
\begin{align}\label{eq:ag-dec-gamma}
 \frac{\partial}{\partial \gamma} \alphahatg(\bw) =
  - \hg''(\bw) \alphahatg(\bw).
\end{align}
From the positive-definiteness of $\hg''$ we can immediately conclude that
$\|\alphahatg(\bw)\|^2$ is a decreasing function of $\gamma$ for any
$\bw \in \R^{N + 1}$.
We can further conclude that $\|\alphahatg(\bw)\| \to 0$ as
$\gamma \to \infty$:
Since $\|\alphahatg(\bw)\|$ is a decreasing function of $\gamma$, it remains 
bounded for $\gamma \in (\gamma_0, \infty)$, where 
$\gamma_0 \in (0, \infty)$.
Thus the moment vector $\uhat(\alphahatg(\bw))$ associated with the ansatz
$G_{\alphahatg(\bw)}$ is also bounded for $\gamma \in (\gamma_0, \infty)$,
since $\uhat$ is a continuous map.
Let
\begin{align}
 u_{\gamma_0,\bw} := \sup_{\gamma \in (\gamma_0, \infty)}
  \|\uhat(\alphahatg(\bw))\|.
\end{align}
Then by rearranging the first-order necessary conditions
\eqref{eq:1st-ord-necc} we have
\begin{align}\label{eq:alphag-to-0}
 \|\alphahatg(\bw)\| = \frac{\|\bw - \uhat(\alphahatg(\bw))\|}{\gamma}
  \le \frac{\|\bw\| + u_{\gamma_0,\bw}}{\gamma},
\end{align}
from which we conclude $\|\alphahatg(\bw)\| \to 0$ as $\gamma \to \infty$.

We can also use \eqref{eq:ag-dec-gamma} to show that the entropy $\hg$ is a 
decreasing function of $\gamma$.
Using the formula
$\hg(\bw) = h(\uhat(\alphahatg(\bw))) + \frac \gamma 2 \|\alphahatg(\bw)\|^2$
(obtained by inserting \eqref{eq:entropy-entropy-flux} and 
\eqref{eq:1st-ord-necc} into \eqref{eq:hg-jg}), we have, after some basic 
manipulations,
\begin{align}\label{eq:hg-dec-gamma}
 \frac{\partial}{\partial \gamma} \hg(\bw)
  = -\frac12 \|\alphahatg(\bw)\|^2 \le 0.
\end{align}

\subsection{Two classes of kinetic entropy functions}

To prove the estimates \eqref{eq:fs-bnd}, we need to assume that the true 
solution $\bu$ is bounded away from the boundary of realizability.
This can be achieved in various ways, but the specific form of the assumption 
must depend on properties of the kinetic entropy function $\eta$.

We consider two kinds of kinetic entropy functions.
The first is defined with the Maxwell--Boltzmann entropy,
\begin{align}\label{eq:mb}
 \eta(z) = z \log(z) - z,
\end{align}
in mind.
(The second term is purely for mathematical convenience.)

\begin{definition}
Let $\eta : (0, \infty) \to \R$ satisfy \assref{smooth-eta}.
We call $\eta$ \emph{superlinear} if ${\range(\eta') = \dom(\eta_*) = \R}$.
\end{definition}

Since $\eta'$ is an increasing function, these entropies grow superlinearly as 
$z \to \infty$.
Furthermore, since $0 \in \range(\eta')$, the entropy $\eta$ has a global 
minimum.

\begin{remark}
Note that we use the term \emph{superlinear} even though the functions
$\eta(z) = z^\alpha$ for ${\alpha \in (1, \infty)}$ do not belong to our class 
of superlinear entropy functions.
\end{remark}

The second kind of entropy we consider includes the Bose--Einstein entropy,
\begin{align}
 \eta(z) = z \log(z) - (1 + z) \log (1 + z),
\end{align}
and the Burg entropy,
\begin{align}
 \eta(z) = -\log(z).
\end{align}

\begin{definition}
Let $\eta : (0, \infty) \to \R$ satisfy \assref{smooth-eta}.
We call $\eta$ \emph{sublinear} if 
${\range(\eta') = \dom(\eta_*) = (-\infty, 0)}$ with
$\lim_{z \to \infty} \eta'(z) = 0$ and
$\lim_{z \to 0} \eta'(z) = -\infty$.
\end{definition}

These entropies are monotonically decreasing functions with no global minimum.
The decay as $z \to \infty$ is sublinear.


\subsection{The superlinear case}

For superlinear entropies we consider the family of sets
\begin{align}\label{eq:ass-M}
 \cR^M := \left\{ \uhat(\bsalpha) : \|\bsalpha\| \le M \right\},
\end{align}
for $M \in (0, \infty)$.
For each $M$, we have $\cR^M \subset \subset \cR$, and as $M \to \infty$, the 
set $\cR^M$ approaches the full realizable set $\cR$ (since under
\assref{v-bnd} we have $\cR = \uhat(\R^{N + 1})$).
We will show that the assumptions of \thmref{main} hold for superlinear 
entropies when $\cS = \cR^M$ for any $M$.

First we give some properties on $\cR^M$.
Since $\cR^M$ is a compact set, both
\begin{align}\label{eq:uM}
 u_M := \sup_{\bw \in \cR^M} \|\bw\|
  \qquand
 h_M := \sup_{\bw \in \cR^M} h(\bw)
\end{align}
are finite.
Since $\hg$ is a decreasing function of $\gamma$ (recall 
\eqref{eq:hg-dec-gamma}), we have
\begin{align}\label{eq:hM-uM}
 h_M = \sup_{\substack{\bw \in \cR^M \\ \gamma \in (0, \infty)}} \hg(\bw).
\end{align}

Another important consequence of restricting $\bu$ to $\cR^M$ is that $\hg''$ 
is bounded from above and below over $\cR^M$.
First recall that $\hg'' = ((\hg)''_* \circ \alphahatg)^{-1}$, so we work with
$(\hg)''_*$.
Let $\bc$ be the unit-length eigenvector associated with the largest eigenvalue 
of $(\hg)''_*(\alphahatgu)$ for some $\bu \in \cR^M$.
Then we have
\begin{subequations}
\begin{align}
 \lambda_{\max}((\hg)''_*(\alphahatgu)) &= \bc \cdot \left(
   \Vint{\bm \bm^T \etad''(\alphahatgu \cdot \bm)} + \gamma I \right) \bc \\
  &= \Vint{(\bc \cdot \bm)^2
   \etad''(\alphahatgu \cdot \bm)} + \gamma \\
  &\le |V| \left( \sup_{y \in [-M, M]} \etad''(y) \right) + \gamma_0
\end{align}
\end{subequations}
for $\gamma \le \gamma_0$, where we have used that $\|\alphahatg(\bw)\|$ is a 
decreasing function of $\gamma$ (recall \eqref{eq:ag-dec-gamma}).
Note that, on $\cR^M$, as $\gamma \to \infty$, all eigenvalues of $\hg''$ go to 
zero, and indeed the function $\hg$ becomes flat.
Since the behavior of the regularized equations for large $\gamma$ is not 
particularly interesting, we rule out these problems by considering only 
$\gamma$ smaller than some arbitrary $\gamma_0$.

On the other hand, if we now let $\bc$ be the unit-length eigenvector 
associated with the smallest eigenvalue of $(\hg)''_*(\alphahatgu)$ for
$\bu \in \cR^M$ we have
\begin{align}
 \lambda_{\min}((\hg)''_*(\alphahatgu))
  = \Vint{(\bc \cdot \bm)^2 \etad''(\alphahatgu \cdot \bm)} + \gamma
  \ge \lambda_{\min}(\vint{\bm \bm^T}) \inf_{y \in [-M, M]} \etad''(y)
\end{align}
(where $\lambda_{\min}(\vint{\bm \bm^T}) > 0$ because $\bm$ span a basis).
Note that \assref{smooth-eta} guarantees strict positivity of $\etad''$ because
$\etad''(y) = 1 / \eta''(\etad'(y))$.

Thus for $\hg''$ we can conclude the existence of positive constants
$\lambda_{\min, h'', M}$
and $C_{h'',  M}$ such that
\begin{align}\label{eq:hess-bnds}
 \aligned
  \bv \cdot \hg''(\bu)\bv &\ge \lambda_{\min, h'', M} \|\bv\|^2 \\
  \|\hg''(\bu)\| &\le C_{h'', M}
 \endaligned
 \quad
 \text{for all } \bv \in \R^{N + 1} \text{, } \bu \in \cR^M
  \text{, and } \gamma \in (0, \gamma_0)
\end{align}

\paragraph{The flux terms}

\begin{lemma}\label{lem:flux-bnd-super}
Let $\eta$ be a superlinear kinetic entropy function, $M \in (0, \infty)$, 
and $\gamma_0 \in (0, \infty)$.
Then there exist positive constants $C_{\bff}$, $C_J$, and $D_J$ such that
\begin{equation}\label{eq:flux-bnd-lem-super}
 \aligned
 \|\fg(\ug | \bu)\| &\le C_{\bff} \hg(\ug | \bu)
   \\
 \|J_\gamma(\ug, \bu)\| &\le C_{J} \hg(\ug | \bu) + D_{J} \gamma^2
 \endaligned
 \quad \forall \ug \in \R^{N+1}, \bu \in \cR^M, \gamma \in (0,\gamma_0).
\end{equation}
These constants depend on $M$ and $\gamma_0$ but are independent of $\ug$, 
$\bu$, and $\gamma$.
\end{lemma}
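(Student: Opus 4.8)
The plan is to reduce both flux estimates to two ingredients: a lower bound on the relative entropy $\hg(\ug \mid \bu)$ in terms of $\|\ug - \bu\|^2$ valid for $\bu \in \cR^M$, and a Taylor-type control of $\fg(\ug \mid \bu)$ and $J_\gamma(\ug, \bu)$ in terms of $\|\ug - \bu\|^2$ plus lower-order terms in $\gamma$. For the first ingredient I would use the representation $\hg(\ug \mid \bu) = \int_0^1 (1-\tau)\, (\ug - \bu) \cdot \hg''(\bu + \tau(\ug - \bu))(\ug - \bu)\, d\tau$ together with a uniform lower bound on $\hg''$ along the segment $[\bu, \ug]$. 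The subtlety here is that \eqref{eq:hess-bnds} only gives the lower bound $\lambda_{\min, h'', M}$ on $\cR^M$, not on the whole segment, so I would instead split into two regimes: when $\ug$ stays in a fixed enlarged neighborhood of $\cR^M$ (where $\hg''$ is still bounded below uniformly, after enlarging $M$ to some $M'$), we get $\hg(\ug \mid \bu) \gtrsim \|\ug - \bu\|^2$; when $\|\ug - \bu\|$ is large, we instead use convexity of $\hg$ to get a bound that is at least linear in $\|\ug - \bu\|$, which combined with boundedness of $\uhat \circ \alphahatg$ and of $\fg$ (via \eqref{eq:fg-bnd} and \eqref{eq:fg-jacobian-bnd}) still controls the flux differences. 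I expect this case split — getting a clean lower bound on the relative entropy that works uniformly in $\gamma$ for \emph{all} $\ug \in \R^{N+1}$, not just near $\cR^M$ — to be the main obstacle.

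For the term $\fg(\ug \mid \bu) = \fg(\ug) - \fg(\bu) - \fg'(\bu)(\ug - \bu)$, in the near regime I would Taylor-expand: $\|\fg(\ug \mid \bu)\| \le \tfrac12 \sup_{\tau \in [0,1]} \|\fg''(\bu + \tau(\ug - \bu))\| \, \|\ug - \bu\|^2$. Since $\fg = \bff \circ \uhat \circ \alphahatg$, the second derivative $\fg''$ is a composition that can be bounded uniformly in $\gamma \in (0, \gamma_0)$ over the enlarged set $\cR^{M'}$, using that $\alphahatg'(\bu) = ((\hg)''_*(\alphahatgu))^{-1}$ and its derivative are controlled by \eqref{eq:hess-bnds} and the analogous upper bounds on $(\hg)''_*$ and its third derivative (which inherit boundedness from $\eta_* \in C^3$ restricted to $[-M, M]$, as in the displayed computations preceding the lemma), that $\uhat$ is smooth with derivatives bounded there, and that $\bff \in C^2$ on $\overline{\cR}$ with bounded Hessian on compacts. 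Combining with $\hg(\ug \mid \bu) \gtrsim \|\ug - \bu\|^2$ gives $\|\fg(\ug \mid \bu)\| \le C_{\bff}\, \hg(\ug \mid \bu)$ in the near regime; in the far regime $\|\fg(\ug \mid \bu)\| \le \|\fg(\ug)\| + \|\fg(\bu)\| + C_1\|\ug - \bu\|$ grows at most linearly in $\|\ug - \bu\|$, which is dominated by $\hg(\ug \mid \bu)$ there since the relative entropy grows at least linearly (indeed superlinearly, but linear suffices). Note there is no $D_{\bff}\gamma^2$ term in the superlinear case — the lemma claims the pure bound $\|\fg(\ug \mid \bu)\| \le C_{\bff}\, \hg(\ug \mid \bu)$ — so I must be careful that the far-regime estimate really has no $\gamma$-dependent additive constant, which it does not since all the bounds used there ($C_1$, $C_2$, $C_3$) are $\gamma$-uniform.

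For $J_\gamma(\ug, \bu) = (\ug - \bu)\, \hg''(\bu)\, (\bff'(\bu) - \fg'(\bu))$ the new feature is the factor $\bff'(\bu) - \fg'(\bu)$, which measures the discrepancy between the original and regularized fluxes at the \emph{same} realizable argument $\bu$. Here I would show $\|\bff'(\bu) - \fg'(\bu)\| \le C\gamma$ for $\bu \in \cR^M$: since $\fg'(\bu) = \bff'(\uhat(\alphahatgu))\, (\uhat \circ \alphahatg)'(\bu)$, one writes $\bff'(\bu) - \fg'(\bu) = \bff'(\bu) - \bff'(\uhat(\alphahatgu))(\uhat\circ\alphahatg)'(\bu)$ and estimates both $\|\uhat(\alphahatgu) - \bu\| = \gamma\|\alphahatgu\| = O(\gamma)$ (from \eqref{eq:1st-ord-necc} and the uniform bound on $\|\alphahatgu\|$ over $\cR^M$) and $\|(\uhat\circ\alphahatg)'(\bu) - I\| = O(\gamma)$ (from the footnote formula $(\uhat\circ\alphahatg)'(\bu) = H(H+\gamma I)^{-1}$ with $H = (\hg)''_*(\alphahatgu) - \gamma I$ bounded above and below on $\cR^M$). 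Then $\|J_\gamma(\ug,\bu)\| \le C_{h'',M}\, \|\ug - \bu\| \cdot C\gamma$, and splitting $\|\ug - \bu\|\,\gamma \le \tfrac12(\|\ug - \bu\|^2 + \gamma^2)$ together with $\|\ug - \bu\|^2 \lesssim \hg(\ug \mid \bu)$ in the near regime (and the linear-growth argument in the far regime, where the $\gamma$-factor makes things only easier) yields $\|J_\gamma(\ug, \bu)\| \le C_J\, \hg(\ug \mid \bu) + D_J\, \gamma^2$. Assembling the near and far estimates and taking the larger constants finishes the proof.
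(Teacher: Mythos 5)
Your near-regime argument (Taylor expansion of $\fg(\ug|\bu)$, the $O(\gamma)$ bound on $\bff'(\bu)-\fg'(\bu)$ via $(\uhat\circ\alphahatg)'(\bu)=H(H+\gamma I)^{-1}$, and Young's inequality for $J_\gamma$) matches the paper's case $\ug\in\cR^L$ essentially verbatim, and you correctly identified the far regime as the main obstacle. But your proposed resolution of that obstacle is where the proof actually has to be done, and as written it has a genuine gap. You claim that ``convexity of $\hg$'' gives a lower bound on $\hg(\ug|\bu)$ that is at least linear in $\|\ug-\bu\|$ and that this suffices. Convexity alone gives only nonnegativity; to dominate the flux terms, which grow like $C_1(1+\delta)\|\ug\|$, you need $\hg(\ug)-\alphahatgu\cdot\ug$ to grow with a slope that eventually exceeds the (arbitrarily large, since $\|\alphahatgu\|$ can be as large as $M$) linear contribution $M\|\ug\|$ of the cross term. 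The paper obtains this by splitting on whether $\|\uhat(\alphahatg(\ug))\|\le\delta\|\ug\|$: in one subcase the penalty term $\tfrac1{2\gamma}\|\uhat(\alphahatg(\ug))-\ug\|^2\ge\tfrac{(1-\delta)^2}{2\gamma_0}\|\ug\|^2$ supplies quadratic growth; in the other, the entropy term $h(\uhat(\alphahatg(\ug)))\ge|V|\eta(\u0hat(\alphahatg(\ug))/|V|)$ supplies superlinear growth because $\eta'$ is onto $\R$ and hence eventually exceeds $MC_0/\delta$. This is precisely where the hypothesis that $\eta$ is \emph{superlinear} enters; your argument never uses it, which is a strong signal something is missing --- for sublinear entropies the analogous statement requires the smaller set $\cR^{M,m}$ and a different estimate (\lemref{bnds-sub}).

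A second, related gap is the intermediate region: after restricting the Taylor argument to $\ug\in\cR^L$ (you are right that the Hessian lower bound fails outside a compact subset of $\cR$, and note $\cR^{M'}$ is \emph{not} a neighborhood of $\cR^M$ in $\R^{N+1}$ --- it contains no non-realizable vectors), and before the large-$\|\ug\|$ asymptotics kick in (they only yield a \emph{positive} lower bound for $\|\ug\|\ge K$ with $K$ satisfying conditions like \eqref{eq:K-conds-a}--\eqref{eq:K-conds-b}), there remains the set $B_K\setminus\cR^L$. There one must show $\inf\hg(\ug|\bu)>0$ uniformly over $\bu\in\cR^M$ \emph{and} $\gamma\in(0,\gamma_0)$; this is not an immediate compactness statement (the infimum over $\gamma$ is over an open interval and $\hg$ degenerates in places as $\gamma\to0$) and is the content of \lemref{C-M-L-positive}, proved by a separate convexity-along-segments argument. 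Your two-regime split (small vs.\ large $\|\ug-\bu\|$) does not cover this region with a quantitative bound. In short: the skeleton is right and you located the difficulty accurately, but the two steps you deferred --- the uniform positive lower bound on the relative entropy off $\cR^L$, and the linear-growth comparison via \lemref{fz} using superlinearity of $\eta$ --- are the substance of the paper's proof, not routine details.
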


In the proof of \lemref{flux-bnd-super} we often use the following elementary 
lemma.

\begin{lemma}\label{lem:fz}
Let $f : [0, \infty) \to \R$ be a twice continuously differentiable function 
such that $f''(z) > 0$ for all $z$, and let $a \in (0, \infty)$, $b \in \R$, 
and $c \in \R$ be given.
If $K \in (0, \infty)$ and $C \in (0, \infty)$ satisfy
\begin{align}\label{eq:prototype-conditions}
 f(K) + c > 0, \quad f'(K) > 0, \quand
 C \ge \max\left\{ \frac{aK + b}{f(K) + c}, \frac{a}{f'(K)} \right\},
\end{align}
then
\begin{align}\label{eq:prototype}
 a z + b \le C(f(z) + c)
\end{align}
holds for all $z \ge K$.
\end{lemma}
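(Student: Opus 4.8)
The plan is to reduce the inequality $az+b \le C(f(z)+c)$ for $z\ge K$ to two facts: that it holds at the single point $z=K$, and that the right-hand side grows at least as fast as the left-hand side from $K$ onward. The first fact is immediate: by the first condition in \eqref{eq:prototype-conditions} we have $f(K)+c>0$, so $C \ge (aK+b)/(f(K)+c)$ rearranges (multiplying through by the positive quantity $f(K)+c$) to $aK+b \le C(f(K)+c)$, which is exactly \eqref{eq:prototype} at $z=K$.

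For the growth comparison I would introduce the auxiliary function $g(z) := C(f(z)+c) - (az+b)$ and show $g(z)\ge 0$ for all $z\ge K$. We have just seen $g(K)\ge 0$, so it suffices to show $g$ is nondecreasing on $[K,\infty)$, i.e.\ $g'(z) = C f'(z) - a \ge 0$ there. Since $f''>0$ everywhere, $f'$ is strictly increasing, so for $z\ge K$ we have $f'(z)\ge f'(K) > 0$ (using the second condition in \eqref{eq:prototype-conditions}); combined with the third condition $C \ge a/f'(K)$, which gives $C f'(K) \ge a$, we obtain $C f'(z) \ge C f'(K) \ge a$, hence $g'(z)\ge 0$ on $[K,\infty)$. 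Therefore $g(z)\ge g(K)\ge 0$ for all $z\ge K$, which is the claim.

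There is essentially no obstacle here; the only point requiring a little care is the sign bookkeeping when rearranging $C \ge (aK+b)/(f(K)+c)$ into $aK+b \le C(f(K)+c)$, which is valid precisely because $f(K)+c>0$ was assumed (note $aK+b$ itself need not be positive, but that is harmless since we never divide by it). One could alternatively write the argument via the fundamental theorem of calculus, $g(z) = g(K) + \int_K^z g'(\zeta)\,d\zeta$, with $g' \ge 0$ on the interval of integration, which makes the monotonicity step explicit without invoking any differentiability theorem beyond what is assumed. Either phrasing gives a one-paragraph proof.
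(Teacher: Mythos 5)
Your proof is correct. The paper itself gives no proof of this lemma (it is labelled ``elementary'' and left to the reader), and your argument --- verify the inequality at $z=K$ using $f(K)+c>0$ and the first branch of the max, then show $\frac{d}{dz}\bigl[C(f(z)+c)-(az+b)\bigr]=Cf'(z)-a\ge Cf'(K)-a\ge 0$ on $[K,\infty)$ via the monotonicity of $f'$ and the second branch of the max --- is precisely the standard argument the conditions \eqref{eq:prototype-conditions} are designed for. No gaps.
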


In the proof of \lemref{flux-bnd-super} we also use the following lemma to get 
a nonzero lower bound the relative entropy.

\begin{lemma}\label{lem:C-M-L-positive}
Let $M \in (0, \infty)$, $L \in (M, \infty)$, and $\gamma_0 \in (0, \infty)$.
Then
\begin{align}\label{eq:C-M-L}
 C_{h, M, L} := \inf_{\substack{\bv \in \R^{N + 1} \setminus \cR^L \\
                                \bu \in \cR^M \\
                                \gamma \in (0, \gamma_0)}}
                 \hg(\bv | \bu)
\end{align}
is strictly positive.
\end{lemma}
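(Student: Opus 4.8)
The plan is to show that the infimum defining $C_{h,M,L}$ is attained (or can be replaced by an infimum over a compact set) and that the integrand is strictly positive there, using the strict convexity of $\hg$ together with the fact that $\bu$ and $\bv$ are separated. First I would recall from \eqref{eq:rel-hg} that
\begin{align*}
 \hg(\bv | \bu) = \hg(\bv) - \hg(\bu) - \alphahatg(\bu) \cdot (\bv - \bu),
\end{align*}
which is nonnegative by convexity of $\hg$ and vanishes only when $\bv = \bu$; since $\bu \in \cR^M$ but $\bv \notin \cR^L \supset \cR^M$, we always have $\bv \ne \bu$, so the integrand is strictly positive pointwise. The issue is purely one of \emph{uniformity} over the unbounded set of admissible $\bv$ and over $\gamma \in (0,\gamma_0)$.

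The main obstacle is the non-compactness in the $\bv$ variable. To handle this I would split into a near regime and a far regime. For the far regime, $\|\bv\|$ large, I would use the formula $\hg(\bv) = h(\uhat(\alphahatg(\bv))) + \tfrac\gamma2\|\alphahatg(\bv)\|^2$ together with the fact that $h$ (equivalently $\hg$) grows superlinearly: more concretely, using \eqref{eq:uhatag-bnd} and the Lipschitz bound \eqref{eq:fg-jacobian-bnd} one controls the linear term $\alphahatg(\bu)\cdot(\bv-\bu)$ by $C_0\|\bu\|\,\|\bv\| + (\text{bounded})$ — note $\|\alphahatg(\bu)\| \le \|\alphahat(\bu)\|$ is bounded on $\cR^M$ since $\|\alphahatg\|$ decreases in $\gamma$ and $\cR^M = \uhat(\{\|\bsalpha\|\le M\})$ forces $\|\alphahat(\bu)\|\le M$ — while $\hg(\bv) \ge h(\uhat(\alphahatg(\bv)))$ and the latter grows faster than linearly in $\|\uhat(\alphahatg(\bv))\|$, which in turn is comparable to $\|\bv\|$ for $\gamma$ bounded. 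Hence there is $L' \in (L,\infty)$, independent of $\gamma \in (0,\gamma_0)$ and $\bu \in \cR^M$, such that $\hg(\bv|\bu) \ge 1$ whenever $\|\bv\| \ge L'$.

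For the near regime, $\bv$ ranging over the compact set $\overline{\cR^L} \setminus \cR^L$ intersected with $\{\|\bv\| \le L'\}$ — actually over the closed bounded set $\{M \le \|\bsalpha\|$-sense$\} $; more cleanly, over $\bv$ in the compact set $K_{L,L'} := \{\bv : \bv \notin \cR^L\} \cap \{\|\bv\| \le L'\}$, which is closed and bounded, hence compact — together with $\bu \in \cR^M$ (compact) and $\gamma \in (0,\gamma_0)$, I would argue the infimum is positive by a compactness/continuity argument. The one subtlety is that $\gamma$ ranges over the half-open interval $(0,\gamma_0)$, so I would check that $\hg(\bv|\bu)$ extends continuously to $\gamma = 0$: indeed $\hg(\bv|\bu) \to h(\bv|\bu)$ as $\gamma \to 0$ for $\bv$ in the realizable set, and for $\bv$ on or outside the realizability boundary one still has $\liminf_{\gamma\to 0}\hg(\bv|\bu) \ge$ a positive quantity because $\hg(\bv) \ge \tfrac1{2\gamma}\|\uhat(\alphahatg(\bv)) - \bv\|^2 \ge 0$ is controlled below by the limiting relative entropy $h(\cdot|\bu)$, which blows up or stays bounded away from $0$ near the boundary. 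Thus on the compact parameter region the continuous function $\hg(\bv|\bu)$ (extended to $[0,\gamma_0]$) attains a positive minimum, since it is strictly positive at every point. Taking the smaller of this minimum and the constant $1$ from the far regime gives the claimed strictly positive lower bound $C_{h,M,L}$.
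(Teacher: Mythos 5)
Your high-level plan (pointwise positivity plus a near/far splitting) is reasonable, but it is not the paper's route, and as written it has two genuine gaps exactly where the difficulty of the lemma lies.

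First, in the far regime you assert that $\|\uhat(\alphahatg(\bv))\|$ is ``comparable to $\|\bv\|$ for $\gamma$ bounded,'' and you lean on superlinear growth of $h$ in that quantity. This comparability is false in general: for $\bv$ far outside $\cR$ (e.g.\ a vector whose nearest realizable point is near the origin, such as one with negative zeroth component in the Maxwell--Boltzmann setting), $\uhat(\alphahatg(\bv))$ can remain bounded, or even tend to $0$, while $\|\bv\|\to\infty$; then the entropy term $h(\uhat(\alphahatg(\bv)))$ gives you nothing and you must instead invoke the penalty term $\tfrac1{2\gamma}\|\uhat(\alphahatg(\bv))-\bv\|^2\ge\tfrac{(1-\delta)^2}{2\gamma_0}\|\bv\|^2$. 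This is precisely the two-subcase split ($\|\uhat(\alphahatg(\ug))\|\lessgtr\delta\|\ug\|$) that the paper performs in case (iii) of the proof of Lemma~\ref{lem:flux-bnd-super}; without it your far-regime bound does not go through (and the entropy-term branch additionally presupposes a superlinear $\eta$, which the lemma as stated does not). Second, in the near regime your compactness argument over $\gamma\in(0,\gamma_0)$ hinges on the claim that $\hg(\bv|\bu)$ extends to $\gamma=0$ with a strictly positive, lower semicontinuous limit, including for $\bv$ on or just outside $\partial\cR$. You assert this (``blows up or stays bounded away from $0$ near the boundary'') but do not prove it; an infimum over $\gamma$ of a family of positive continuous functions need not be positive, and the boundary behaviour of $h$ and of $\alphahatg$ as $\gamma\to0$ is exactly the delicate point. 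Note also that $\R^{N+1}\setminus\cR^L$ is open, so your set $K_{L,L'}$ is not compact without taking a closure, which pushes $\bv$ onto $\partial(\cR^L)$ and does not by itself resolve the $\gamma\to0$ issue.

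The paper sidesteps both problems with a convexity trick: for $Q\in(M,L)$, the segment from $\bu\in\cR^M$ to any $\bv\notin\cR^L$ must cross the compact annulus $\cR^L\setminus\cR^Q$ at some $\bw_Q=(1-\lambda_Q)\bu+\lambda_Q\bv$; on $\conv(\cR^L)$ the uniform Hessian bounds \eqref{eq:hess-bnds} give $\hg(\bw_Q|\bu)\ge\lambda_{\min,h'',\widetilde L}\,\dist(\cR^M,\overline{\cR^L\setminus\cR^Q})^2=:C_L>0$ uniformly in $\gamma\in(0,\gamma_0)$, and convexity of $\hg(\cdot|\bu)$ together with $\hg(\bu|\bu)=0$ and $\lambda_Q\in(0,1)$ yields $\hg(\bv|\bu)\ge C_L$. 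This avoids any analysis at infinity or of the $\gamma\to0$ limit near $\partial\cR$. If you want to salvage your approach, you would need to (i) add the case split on $\|\uhat(\alphahatg(\bv))\|$ versus $\delta\|\bv\|$ in the far regime and (ii) supply a genuine uniform-in-$\gamma$ lower bound on the closed annular region in the near regime — at which point you would essentially be reproving the ingredients the paper combines more economically.
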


The proof of \lemref{C-M-L-positive} can be found in 
Appendix~\ref{sec:C-M-L-positive}

\begin{proof}[\lemref{flux-bnd-super}]

We partition $\R^{N + 1}$ into three subsets and consider $\ug$ on each of these 
three subsets, which we illustrate in \figref{sets}.
On the first set, $\cR^L$ for $L \in (M, \infty)$, we take advantage of the 
fact that $\fg$, $J_\gamma$, and the relative entropy all look like
$\|\ug - \bu\|^2$ locally, up to an $\cO(\gamma^2)$ term.
On the second set, $B_K \setminus \cR^L$, where $B_K$ is a norm ball in
$\R^{N + 1}$, we use the fact that neither $\fg$ nor $J_\gamma$ nor $q_\gamma$ 
blow up on compact sets.
Finally, in the third set we use that $\fg$ and $J_\gamma$ grow linearly in 
either $\|\ug\|$ or $\u0hat(\alphahatg(\ug))$ for large $\|\ug\|$ 
while $\hg(\ug)$ grows at least linearly in $\|\ug\|$ or 
$\u0hat(\alphahatg(\ug))$.

\begin{figure}
\large
\input{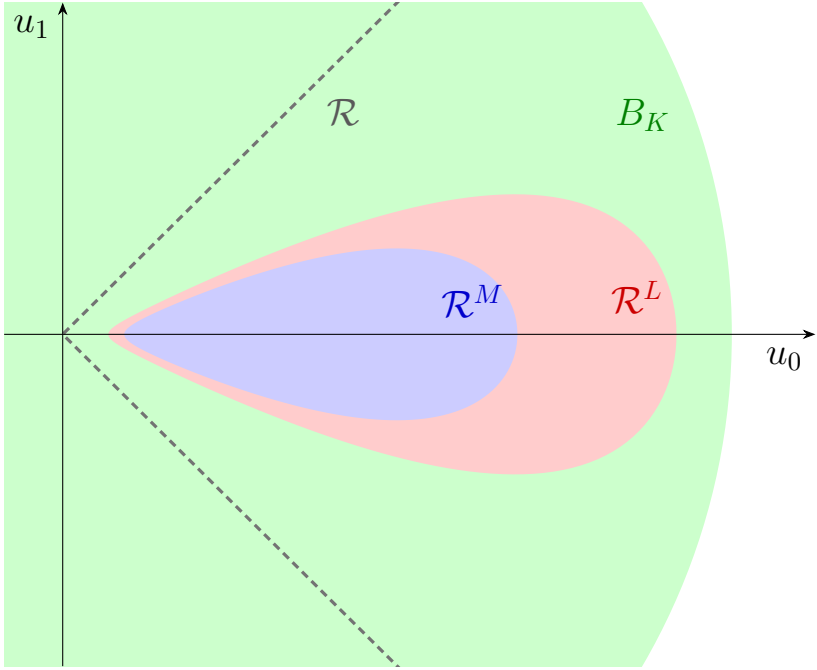}
\caption{The sets used in the proof of \lemref{flux-bnd-super}.
For this figure, we consider the $M_1$ case in slab geometry: $V = [-1, 1]$,
$\bm(v) = (1, v)$.
In this case, the realizable set is given by
$\cR = \{ (u_0, u_1) : |u_1| < u_0\}$, and we used $M = 1$, $L = 1.3$, and
$K = 8$.}
\label{fig:sets}
\end{figure}

\begin{enumerate}[(i)]
 \item We begin with $(\ug, \bu) \in \cR^L \times \cR^M$ for an
  $L \in (M, \infty)$.
  (The reason for choosing $L > M$ is given in case (ii) below.)
  
  When $\fg$ is sufficiently smooth there exists an $\bw$ on the line
  connecting $\ug$ and $\bu$ such that
  \begin{align}
   \fg(\ug | \bu) = (\fg''(\bw) (\ug - \bu))(\ug - \bu)\,.
  \end{align}
  In our case, $\fg$ indeed possesses the requisite smoothness:
  We can write $\fg$ as ${\fg = \bg \circ \alphahatg}$, where
  $\bg(\bsalpha) = \Vint{v \bm \Ga}$.
  The function $\bg$ is smooth and its derivatives are bounded over any bounded 
  set.
  Note that $\bw \in \conv(\cR^L)$, and since $\cR^L \subset \subset \cR$ and
  $\cR$ is convex, we also know $\conv(\cR^L) \subset \subset \cR$.
  Thus we define
  \begin{align}\label{eq:L-tilde}
   \widetilde L := \sup_{\bw \in \conv(\cR^L)} \|\alphahat(\bw)\| < \infty.
  \end{align}
  Since $\alphahatg(\bw)$ is continuous with respect to $\gamma$ for
  $\gamma \in [0, \infty)$ (where $\alphahat_{\gamma = 0} = \alphahat$ when 
  $\bw \in \cR$) \cite[\S 3.1]{AllFraHau19} and $\|\alphahatg(\bw)\|$ 
  is a decreasing function of $\gamma$ (recall \eqref{eq:ag-dec-gamma}) we 
  have
  \begin{align}
   \widetilde L = \sup_{\substack{\bw \in \conv(\cR^L) \\
                                   \gamma \in (0, \infty)}}
    \|\alphahatg(\bw)\|.
  \end{align}
  
  Thus it is clear that we only consider $\bg$ and its derivatives within a
  bounded set.
  Furthermore, when bounded away from the boundary of $\cR$, $\alphahatgu$ is a 
  smooth function of $\bu$, and this smoothness is uniform for
  $\gamma \in (0, \gamma_0)$.
  For example, $\alphahatg' = \hg''$ (recall \eqref{eq:hg-hess}), and from
  \eqref{eq:hess-bnds} we have
  $\|\hg''(\bw)\| \le C_{h'', \widetilde L}$ for
  $\bw \in \cR^L$.
  The second derivative $\alphahatg''$ can be similarly bounded, but we omit 
  this more tedious computation.
  
  So we define
  \begin{align}\label{eq:C-fg''}
   C_{\bff'', L}
    := \sup_{\substack{\bw \in \conv(\cR^L) \\
             \gamma \in (0, \gamma_0)}}
        \|\fg''(\bw)\|
  \end{align}
  for some $\gamma_0 \in (0, \infty)$ and conclude
  \begin{align}\label{eq:fg-i}
   \| \fg(\ug | \bu) \| \le C_{\bff'', L} \|\ug - \bu\|^2
  \end{align}
  for $(\ug, \bu) \in \cR^L \times \cR^M$.
  
  We now turn to $J_\gamma$.
  The deciding factor in $J_\gamma$ is $(\bff'(\bu) - \fg'(\bu))$; to
  estimate it we use $\fg = \bff \circ \uhat \circ \alphahatg$, the 
  Lipschitz continuity of $\bff'$ on $\cR^M$ (guaranteed by
  \eqref{eq:C-fg''}), and the accuracy inequality
  \begin{align}
   \|\uhat(\alphahatgu) - \bu\| \le M \gamma
  \end{align}
  from \cite[Thm. 2]{AllFraHau19} as follows:
  \begin{subequations}
  \label{eq:lip-fg'}
  \begin{align}
   \|\bff'(\bu) - \fg'(\bu)\| &= \|\bff'(\bu)
     - \bff'(\uhat(\alphahatgu))(\uhat \circ \alphahatg)'(\bu)\| \\
    &= \|\bff'(\bu) - \bff'(\bu)(\uhat \circ \alphahatg)'(\bu) \nonumber \\
    &\qquad + \bff'(\bu)(\uhat \circ \alphahatg)'(\bu)
     - \bff'(\uhat(\alphahatgu))(\uhat \circ \alphahatg)'(\bu)\| \\
    &\le \|\bff'(\bu) (I - (\uhat \circ \alphahatg)'(\bu))\| \nonumber \\
    &\qquad + \|(\bff'(\bu)
         - \bff'(\uhat(\alphahatgu)))(\uhat \circ \alphahatg)'(\bu)\| \\
    &\le C_1 \gamma \|\hg''(\bu)\|
     + C_{\bff'', M}
       M \gamma \|(\uhat \circ \alphahatg)'(\bu)\| \label{eq:J-fg-hess-step} \\
    &\le (C_1 C_{h'', M} + C_{\bff'', M} M C_2) \gamma \\
    &=: C_4 \gamma,
  \end{align}
  \end{subequations}
  where we define $C_4 := C_1 C_{h'', M} + C_{\bff'', M} M C_2$.
  In \eqref{eq:J-fg-hess-step} we have used
  \begin{align}
   I - (\uhat \circ \alphahatg)'(\bu) = \gamma \hg''(\bu),
  \end{align}
  which is a straightforward computation using $\uhat' = h''_*$ and   
  ${\alphahatg'(\bu)= (h''_*(\alphahatgu) + \gamma I)^{-1}}$ (see   
  \eqref{eq:hg-hess}).
  In this step we are also able to use $C_{\bff'', M}$ because
  $\|\alphahatg(\bu)\|$ is a decreasing function of $\gamma$ (recall 
  \eqref{eq:ag-dec-gamma}).

  With \eqref{eq:lip-fg'} and the bound on $\hg''$ from \eqref{eq:hess-bnds} 
  we can bound $J_\gamma$ by
  \begin{subequations}
  \label{eq:J-i}
  \begin{align}
   \|J_\gamma(\ug, \bu)\| &\le \|\ug - \bu\| \|\hg''(\bu)\|  
     \|\bff'(\bu) - \fg'(\bu)\| \\
    &\le \|\ug - \bu\| C_{h'', M} C_4 \gamma \\
    &\le \frac{C_{h'', M} C_4}2 \left( \gamma^2 + \|\ug - \bu\|^2 
                                       \right),
  \end{align}
  \end{subequations}
  where for the last step we have applied Young's inequality.
  
  Finally, we must bound $\hg(\ug | \bu)$ similarly from below.
  This follows immediately from \eqref{eq:hess-bnds}:
  \begin{align}\label{eq:rel-ent-bnd-below-L}
   \hg(\ug | \bu) = (\ug - \bu) \cdot \hg''(\bw)(\ug - \bu)
    \ge \lambda_{\min, h'', \widetilde L} \|\ug - \bu\|^2
  \end{align}
  for all $(\ug, \bu) \in \cR^L \times \cR^M$ and $\bw$ is the appropriate 
  vector in $\conv(\cR^L)$ from the mean-value theorem.
  
  Altogether we have
  \begin{align}
   \|\fg(\ug | \bu)\| &\le \frac{C_{\bff'', L}}
                                {\lambda_{\min, h'', \widetilde L}} 
    \hg(\ug | \bu) \\
   \|J_\gamma(\ug, \bu)\| &\le \frac{C_{h'', M} C_4}
                                    {2 \lambda_{\min, h'', \widetilde L}}
                               \hg(\ug | \bu)
                               + \frac{C_{h'', M} C_4}2 \gamma^2
  \end{align}
  for $(\ug, \bu) \in \cR^L \times \cR^M$.

 \item We now consider $\ug \in B_K \setminus \cR^L$ for the ball
  $B_K := \{\bw \in \R^{N + 1} : \|\bw\| \le K\}$.
  Since $B_K$ is a compact set, we know that the constants
  \begin{align}\label{eq:fg-Jg-B_K}
   C_{\bff, K} &:= \sup_{\substack{\ug \in B_K \setminus \cR^L \\ 
                                        \bu \in \cR^M \\
                                        \gamma \in (0, \gamma_0)}}
                         \|\fg(\ug | \bu)\| \\
   C_{J, K} &:= \sup_{\substack{
                                    \ug \in B_K \setminus \cR^L \\ 
                                    \bu \in \cR^M \\
                                    \gamma \in (0, \gamma_0)}}
                         \|J_\gamma(\ug, \bu)\|
  \end{align}
  are finite for any $\gamma_0 \in (0, \infty)$.
  Indeed with \eqref{eq:fg-jacobian-bnd}, \eqref{eq:fg-bnd}, \eqref{eq:uM}, and 
  \eqref{eq:hess-bnds}
  we immediately have the crude upper bounds
  \begin{align}
   C_{\bff, K} \le C_1((C_2 + 1) (K + u_M) + 2C_3) \quand
   C_{J, K} \le 2(K + u_M) C_{h'', M} C_1.
  \end{align}
  
  To bound $\hg(\ug | \bu)$  from below we use $C_{h, M, L}$ from 
  \lemref{C-M-L-positive}.
  (Note that here $L > M$ is crucial: this lower bound is not strictly positive 
  for $L \le M$.)
  
  Altogether we have
  \begin{align}
   \|\fg(\ug| \bu)\| \le \frac{C_{\bff, K}}{C_{h, M, L}}
    \hg(\ug | \bu)
   \qquand
   \|J_\gamma(\ug, \bu)\| \le \frac{C_{J, K}}{C_{h, M, L}}
    \hg(\ug | \bu).
  \end{align}
  Up to now, $K$ is arbitrary; in the next and final case we give a lower bound 
  that $K$ must satisfy.
  
 \item Finally, for $\ug \in \R^{N + 1} \setminus B_K$, how we proceed depends 
  on which term in $\hg$ dominates: when $\u0hat(\alphahatg(\ug))$ is large, we 
  use the entropy term, otherwise we use the quadratic term.
  To distinguish these two subcases, let $\delta \in (0, 1)$.
  The following arguments work for any value of $\delta \in (0, 1)$; the 
  particular choice of $\delta$ merely affects the value of the constants
  $C_{\bff}$ and $C_J$.
  \begin{enumerate}[(a)]
   \item $\|\uhat(\alphahatg(\ug))\| \le \delta \|\ug\|$ \\
    Here, we use the quadratic term of the relative entropy to dominate the 
    linear term of the relative flux.
    We bound the relative flux by
    \begin{align}\label{eq:fg-rel-iii-a}
     \|\fg(\ug, \bu)\| &\le C_1 \|\uhat(\alphahatg(\ug))\| + C_1 \|\ug\|
       + C_1 (C_2 u_M + C_3) + C_1 u_M \nonumber \\
      &\le C_1 (1 + \delta) \|\ug\| + C_1 ((C_2 + 1) u_M + C_3),
    \end{align}
    $J_\gamma$ by
    \begin{align}\label{eq:J-iii-a}
     \|J_\gamma(\ug, \bu)\| \le 2 C_1 C_{h'', M} (\|\ug\| + u_M),
    \end{align}
    and the relative entropy by
    \begin{align}
     \hg(\ug | \bu) &\ge h(\uhat(\alphahatg(\ug)))
       + \frac1{2\gamma} (\|\ug\| - \|\uhat(\alphahatg(\ug))\|)^2
       - h_M - M(\|\ug\| + u_M) \nonumber \\
      &\ge |V|\eta_{\min} + \frac{(1 - \delta)^2}{2\gamma_0} \|\ug\|^2
       - h_M - M(\|\ug\| + u_M) \label{eq:hg-lb-iii-a}
    \end{align}
    where we use $\eta_{\min} := \min_{z \ge 0} \eta(z)$, and in the first 
    inequality we have used that $\|\alphahatg(\bu)\|$ is a decreasing function 
    of $\gamma$ (recall \eqref{eq:ag-dec-gamma}).
    
    Then the application of \lemref{fz} with
    $z = \|\ug\|$ and ${f(z) = (1 - \delta)^2 z^2 / (2 \gamma_0) - M z}$ gives 
    the following conditions on $K$, $C_{\bff}$, and $C_J$:
    \begin{gather}
     \frac{(1 - \delta)^2}{\gamma_0} K^2 - M K
      >  h_M + M u_M - 2\eta_{\min},
      \quad
     \frac{(1 - \delta)^2}{\gamma_0} K > M, \label{eq:K-conds-a} \\
     \aligned
      C_{\bff} &\ge \max\left\{
       \frac{C_1 (1 + \delta) K + C_1 ((C_2 + 1) u_M + C_3)}
       {\frac{(1 - \delta)^2}{\gamma_0} K^2 - M K + |V|\eta_{\min} - h_M
        - M u_M},
       \frac{C_1 (1 + \delta)}{\frac{(1 - \delta)^2}{\gamma_0}K - M}
      \right\}, \text{ and} \\
      C_{J} &\ge \max\left\{
       \frac{2 C_1 C_{h'', M} (K + u_M)}
       {\frac{(1 - \delta)^2}{\gamma_0} K^2 - M K + |V|\eta_{\min} - h_M
        - M u_M},
       \frac{2 C_1 C_{h'', M}}{\frac{(1 - \delta)^2}{\gamma_0}K - M}
      \right\}
     \endaligned
    \end{gather}

   \item $\|\uhat(\alphahatg(\ug))\| > \delta \|\ug\|$ \\
    In this case, we know that $\uhat(\alphahatg(\ug))$ is not arbitrarily 
    small and therefore formulate our bounds in terms of
    $\u0hat(\alphahatg(\ug))$.
    First, for the upper bounds we have, using \eqref{eq:C0},
    \begin{align}
     \| \fg(\ug, \bu)\| &\le \left(C_1 C_0 + C_1 \frac{C_0}\delta\right) 
      \u0hat(\alphahatg(\ug)) + C_1 (C_2 u_M + C_3) + C_1 u_M, \text{ and}
      \label{eq:fg-rel-iii-b} \\
     \|J_\gamma(\ug, \bu)\| &\le 2 C_1 C_{h'', M} \left(
      \frac{C_0}\delta \u0hat(\alphahatg(\ug)) + u_M \right).
      \label{eq:J-iii-b}
    \end{align}

    For the lower bound on the entropy, notice that by Jensen's inequality
    \begin{align}
     \Vint{\eta(g)} \ge |V| \eta\left( \frac1{|V|} \Vint{g} \right),
    \end{align}
    from which it follows that
    \begin{align}
     h(\bu) \ge |V| \eta\left( \frac1{|V|} u_0 \right),
    \end{align}
    for any realizable $\bu$.
    Thus for the relative entropy we have
    \begin{align}\label{eq:hg-lb-iii-b}
     \hg(\ug | \bu) \ge |V|\eta\left(\frac1{|V|} \u0hat(\alphahatg(\ug))\right) 
      - h_M
      - M \left( \frac{C_0}{\delta} \u0hat(\alphahatg(\ug)) + u_M 
      \right).
    \end{align}
    
    Now, with the lower bound on $\u0hat(\alphahatg(\ug))$, namely,
    \begin{align}
     \u0hat(\alphahatg(\ug)) \ge \frac1{C_0} \|\uhat(\alphahatg(\ug))\|
      \ge \frac\delta{C_0} \|\ug\|
      \ge \frac{\delta K}{C_0}
    \end{align}
    we can apply \lemref{fz} with $z = \u0hat(\alphahatg(\ug))$ and
    ${f(z) = |V|\eta(z / |V|) - M C_0 z / \delta}$.
    Here the conditions on $K$, $C_{\bff}$, and $C_J$ become
    \begin{gather}
     |V|\eta\left( \frac{\delta K}{|V| C_0} \right) - M K > h_M + M u_M, \quad
      \eta'\left( \frac{\delta K}{|V| C_0} \right) > \frac{M C_0}\delta, 
      \label{eq:K-conds-b} \\
     \aligned
      C_{\bff} &\ge \max\left\{ \frac{C_1((\delta + 1)K + (C_2 + 1)u_M + C_3)}
        {|V|\eta\left( \frac{\delta K}{|V| C_0} \right)
         - M K - h_M - M u_M}, 
        \frac{C_1 C_0 \left( 1 + \frac1\delta \right)}
         {\eta'\left( \frac{\delta K}{|V| C_0} \right) - \frac{M C_0}\delta}
        \right\}, \text{ and} \\
      C_J &\ge \max\left\{ \frac{2 C_1 C_{h'', M} \left(
                                 K + u_M \right)}
        {|V|\eta\left( \frac{\delta K}{|V| C_0} \right)
         - M K - h_M - M u_M}, 
        \frac{2 C_0 C_1 C_{h'', M}}
         {\delta \left( \eta'\left( \frac{\delta K}{|V| C_0} \right)
          - \frac{M C_0}\delta \right)}
        \right\}
     \endaligned
    \end{gather}
  \end{enumerate}
\end{enumerate}

Altogether we get \eqref{eq:flux-bnd-lem-super} for
\begin{align}
 C_{\bff} &= \max\left\{ \frac{C_{\bff'', L}}
                              {\lambda_{\min, h'', \widetilde L}},
  \frac{C_{\bff, K}}{C_{h, M, L}},
  \frac{C_1 (1 + \delta) K + C_1 ((C_2 + 1) u_M + C_3)}
   {\frac{(1 - \delta)^2}{\gamma_0} K^2 - M K + |V|\eta_{\min}
    - h_M - M u_M}, \right. \nonumber \\
 & \qquad \qquad \left.
  \frac{C_1 (1 + \delta)}{\frac{(1 - \delta)^2}{\gamma_0}K - M},
  \frac{C_1((\delta + 1)K + (C_2 + 1)u_M + C_3)}
       {|V|\eta\left( \frac{\delta K}{|V| C_0} \right)
        - M K - h_M - M u_M},
  \frac{C_1 C_0 \left( 1 + \frac1\delta \right)}
       {\eta'\left( \frac{\delta K}{|V| C_0} \right) - \frac{M C_0}\delta} 
  \right\}, \\
 C_J &= \max\left\{ \frac{C_{h'', M} C_4}
                         {2 \lambda_{\min, h'', \widetilde L}},
   \frac{C_{J, K}}{C_{h, M, L}},
   \frac{2 C_1 C_{h'', M} (K + u_M)}
        {\frac{(1 - \delta)^2}{\gamma_0} K^2 - M K + |V|\eta_{\min} - h_M
         - M u_M},
   \right. \nonumber \\
  & \qquad \qquad \left.
   \frac{2 C_1 C_{h'', M}}{\frac{(1 - \delta)^2}{\gamma_0}K - M},
   \frac{2 C_1 C_{h'', M} \left( K + u_M \right)}
        {|V|\eta\left( \frac{\delta K}{|V| C_0} \right)
         - M K - h_M - M u_M}, 
   \frac{2 C_0 C_1 C_{h'', M}}
        {\delta \left( \eta'\left( \frac{\delta K}{|V| C_0} \right)
         - \frac{M C_0}\delta \right)}
   \right\}, \text{ and} \\
 D_J &= \frac{C_{h'', M} C_4} 2,
\end{align}
where $K$ satisfies \eqref{eq:K-conds-a} and \eqref{eq:K-conds-b}.
\end{proof}

\paragraph{The source term}

\begin{lemma}\label{lem:src-bnd-super}
Let $\eta$ be a superlinear kinetic entropy function, $M \in (0, \infty)$, 
and $\gamma_0 \in (0, \infty)$.
Then there exist positive constants $C_q$ and $D_q$ such that
\begin{equation}\label{eq:src-bnd-lem-super}
 q_\gamma(\ug, \bu) \le C_q \hg(\ug | \bu) + D_q \gamma^2
 \quad \forall \ug \in \R^{N+1}, \bu \in \cR^M, \gamma \in (0,\gamma_0).
\end{equation}
\end{lemma}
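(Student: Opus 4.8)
The plan is to follow the three-region strategy of \lemref{flux-bnd-super} (same sets as in \figref{sets}), but first to extract from $q_\gamma$ an estimate that is uniform in $\gamma$. Recalling $\hg' = \alphahatg$ and $\rg = \br \circ \uhat \circ \alphahatg$ (the identity that also gives $\fg = \bff \circ \uhat \circ \alphahatg$), we write
\begin{equation*}
 q_\gamma(\ug, \bu) = \bigl( \alphahatg(\ug) - \alphahatg(\bu) \bigr) \cdot \rg(\ug)
  - \br(\bu) \cdot \hg''(\bu)(\ug - \bu).
\end{equation*}
The key observation is the entropy-dissipation property $\alphahatg(\ug) \cdot \rg(\ug) = \hg'(\ug) \cdot \rg(\ug) \le 0$, which lets us discard the $\alphahatg(\ug)$ contribution to the first term and keep only $-\alphahatg(\bu) \cdot \rg(\ug) \le \|\alphahatg(\bu)\|\,\|\rg(\ug)\|$. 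Since $\bu \in \cR^M$ forces $\|\alphahat(\bu)\| \le M$ and $\|\alphahatg(\bu)\|$ is decreasing in $\gamma$ (recall \eqref{eq:ag-dec-gamma}), we have $\|\alphahatg(\bu)\| \le M$. Combined with \assref{r-lip} (so $\|\br(\bw)\| \le C_{\br}\|\bw\|$), with \eqref{eq:uhatag-bnd}, \eqref{eq:C0} and the Hessian bound \eqref{eq:hess-bnds}, this yields a crude bound of the form
\begin{equation*}
 q_\gamma(\ug, \bu) \le A\|\ug\| + B
 \qquad \forall\, \ug \in \R^{N+1},\ \bu \in \cR^M,\ \gamma \in (0, \gamma_0),
\end{equation*}
with $A, B$ depending only on $M$ and $\gamma_0$. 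Since \thmref{main} needs only an upper bound on $q_\gamma$, the one-sided use of entropy dissipation costs nothing.

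On $\ug \in B_K \setminus \cR^L$ this crude bound reads $q_\gamma \le AK + B$, and \lemref{C-M-L-positive} supplies $\hg(\ug | \bu) \ge C_{h, M, L} > 0$, so $q_\gamma \le \tfrac{AK + B}{C_{h, M, L}} \hg(\ug | \bu)$ with no $\gamma^2$ term needed. On $\ug \in \R^{N+1} \setminus B_K$ I would reuse the two subcases of case (iii) of \lemref{flux-bnd-super} essentially unchanged: in subcase (a), where $\|\uhat(\alphahatg(\ug))\| \le \delta\|\ug\|$, the quadratic lower bound \eqref{eq:hg-lb-iii-a} on $\hg(\ug|\bu)$ together with $q_\gamma \le A\|\ug\| + B$ and \lemref{fz} (with $z = \|\ug\|$) closes the case; in subcase (b), where $\|\uhat(\alphahatg(\ug))\| > \delta\|\ug\|$, one has $\|\ug\| \le \tfrac{C_0}{\delta}\u0hat(\alphahatg(\ug))$, so $q_\gamma \le A\tfrac{C_0}{\delta}\u0hat(\alphahatg(\ug)) + B$, and the superlinear lower bound \eqref{eq:hg-lb-iii-b} with \lemref{fz} (with $z = \u0hat(\alphahatg(\ug))$ and the same $f$, $f(z) = |V|\eta(z/|V|) - M C_0 z/\delta$, as in the flux proof) closes it. In both subcases superlinearity of $\eta$ is exactly what guarantees that $\hg(\ug|\bu)$ eventually dominates a function that is merely linear in $\|\ug\|$.

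The substantive case is the near-diagonal region $\ug \in \cR^L$ (with $L > M$), where $\hg(\ug|\bu) \ge \lambda_{\min, h'', \widetilde L}\|\ug - \bu\|^2$ by \eqref{eq:rel-ent-bnd-below-L}, so $q_\gamma$ must be shown to be $\cO(\|\ug - \bu\|^2) + \cO(\gamma^2)$ and the crude linear bound is useless. Here I would expand the two factors of the first term separately: $\alphahatg(\ug) - \alphahatg(\bu) = \hg''(\bu)(\ug - \bu) + R_1$ with $\|R_1\| = \cO(\|\ug - \bu\|^2)$ uniformly in $\gamma \in (0, \gamma_0)$, using the uniform-in-$\gamma$ bound on $\alphahatg''$ over $\conv(\cR^L)$ noted in the proof of \lemref{flux-bnd-super}; and $\rg(\ug) = \br(\uhat(\alphahatg(\ug))) = \br(\bu) + R_2$ with $\|R_2\| \le C_{\br}\bigl( \|\ug - \bu\| + L\gamma \bigr)$, using Lipschitz continuity of $\br$ and the accuracy estimate $\|\uhat(\alphahatg(\ug)) - \ug\| \le L\gamma$ from \cite[Thm.~2]{AllFraHau19}, valid on $\cR^L$. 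Substituting, the leading terms $\hg''(\bu)(\ug - \bu)\cdot\br(\bu)$ and $\br(\bu)\cdot\hg''(\bu)(\ug - \bu)$ cancel by symmetry of $\hg''$, leaving
\begin{equation*}
 q_\gamma(\ug, \bu) = \hg''(\bu)(\ug - \bu)\cdot R_2 + R_1\cdot\br(\bu) + R_1\cdot R_2.
\end{equation*}
Estimating each piece with \eqref{eq:hess-bnds}, $\|\br(\bu)\| \le C_{\br} u_M$, the boundedness of $\|\ug - \bu\|$ on $\cR^L \times \cR^M$, $\gamma < \gamma_0$, and Young's inequality on the single cross term that is genuinely linear in $\|\ug - \bu\|$ with an $\cO(\gamma)$ coefficient, one obtains $q_\gamma \le C'(\|\ug - \bu\|^2 + \gamma^2)$ on $\cR^L \times \cR^M$, hence $q_\gamma \le \tfrac{C'}{\lambda_{\min, h'', \widetilde L}}\hg(\ug|\bu) + C'\gamma^2$. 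Choosing $K$ large enough to meet the hypotheses of the two applications of \lemref{fz} and collecting constants over the three regions yields $C_q$ and $D_q$.

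The expected main obstacle is precisely this cancellation-and-remainder bookkeeping in the $\cR^L$ region: it forces us to use the exact algebraic structure of $q_\gamma$ rather than mere norm estimates, and it is where the $\gamma^2$ term in the statement is genuinely produced, since the $\ug$-gradient of $q_\gamma$ at $\ug = \bu$ is not zero but only $\cO(\gamma)$ (because $\rg(\bu) = \br(\uhat(\alphahatg(\bu))) \ne \br(\bu)$ in general, the discrepancy being $\cO(\gamma)$). Everything else is a routine adaptation of \lemref{flux-bnd-super}.
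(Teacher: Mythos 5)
Your proposal is correct and follows essentially the same route as the paper: the same three-region decomposition, the same use of $\hg'(\ug)\cdot\rg(\ug)\le 0$ to discard the dissipative part away from the diagonal, the same linear-growth-versus-relative-entropy argument via \lemref{fz} for large $\ug$, and on $\cR^L$ the same exploitation of the algebraic structure of $q_\gamma$ together with the accuracy estimate from \cite[Thm.~2]{AllFraHau19} and Young's inequality to produce the $\gamma^2$ term. Your expansion of each factor into leading term plus remainder with explicit cancellation is just an algebraically equivalent reorganization of the paper's rearrangement $q_\gamma = (\hg'(\ug)-\hg'(\bu))\cdot(\rg(\ug)-\br(\bu)) + (\hg'(\ug)-\hg'(\bu)-\hg''(\bu)(\ug-\bu))\cdot\br(\bu)$.
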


\begin{proof}
We use the same lower bounds just derived for the 
relative entropy $\hg(\ug | \bu)$ in \lemref{flux-bnd-super}.
Thus we only need to give upper bounds of $q_\gamma$ on the same decomposition 
of $\R^{N + 1} \times \cR^M$ used in the proof of \lemref{flux-bnd-super}.

\begin{enumerate}[(i)]
 \item Let $(\ug, \bu) \in \cR^L \times \cR^M$.
  To write down an upper bound of $q_\gamma$, we define the constants
  \begin{align}\label{eq:h'''-rM}
   C_{h'''} = \sup_{\substack{\bv \in \conv(\cR^L) \\
                               \gamma \in (0, \gamma_0)}}
     \|\hg'''(\bv)\| \qquand
   r_M = \sup_{\bv \in \cR^M}
     \|\br(\bv)\|,
  \end{align}
  all finite by smoothness of $\hg$ and $\br$ and compactness of
  $\conv(\cR^L)$ and $\cR^M$, and we use
  \begin{align}\label{eq:approx-acc-M}
   \|\uhat(\alphahatg(\ug)) - \bu\| \le \|\ug - \bu\| + M\gamma,
  \end{align}
  which follows from $\bu \in \cR^M$ \cite[Thm. 2]{AllFraHau19}.
  Then we rearrange $q_\gamma$ and straightforwardly get the estimate
  \begin{subequations}
  \begin{align}
   q_\gamma(\ug, \bu) &= (\hg'(\ug) - \hg'(\bu)) \cdot (\rg(\ug) - \br(\bu)) 
     \nonumber \\
    & \qquad + (\hg'(\ug) - \hg'(\bu) - \hg''(\bu)(\ug - \bu))
     \cdot \br(\bu)\, \\
    &\le (C_{h'', \widetilde L} C_{\br}
     + r_M C_{h'''})\|\ug - \bu\|^2
     + C_{h'', \widetilde L} C_{\br} M \gamma \|\ug - \bu\| \\
    &\le (C_{h'', \widetilde L} C_{\br} + r_M C_{h'''})\|\ug - \bu\|^2
     + \frac{C_{h'', \widetilde L} C_{\br} M}2
    \left( \gamma^2 + \|\ug - \bu\|^2 \right),
  \end{align}
  \end{subequations}
  which appropriately mirrors \eqref{eq:fg-i} and \eqref{eq:J-i}.
  
 \item Now for the case $\ug \in B_K \setminus \cR^L$ (where again
  $B_K := \{\bw \in \R^n : \|\bw\| \le K\}$), we first use
  $\hg'(\ug) \cdot \rg(\ug) \le 0$ to get
  \begin{align}
   q_\gamma(\ug, \bu) \le -\hg'(\bu) \cdot \rg(\ug)
    - \br(\bu) \cdot (\hg''(\bu)(\ug - \bu))
  \end{align}
  As with the flux, none of these terms blow up for
  $(\ug, \bu) \in B_K \times \cR^M$ for any finite $K$:
  \begin{subequations}
  \begin{align}
   C_{q, K, \gamma_0} &:= \sup_{\substack{\ug \in B_K \setminus \conv(\cR^L) \\ 
                                         \bu \in \cR^M \\
                                         \gamma \in (0, \gamma_0)}}
                         -\hg'(\bu) \cdot \rg(\ug)
                         - \br(\bu) \cdot (\hg''(\bu)(\ug - \bu)) \\
                      &\le M C_{\br} (C_2 K + C_3)
                       + r_M C_{h'', M} (K + u_M).
  \end{align}
  \end{subequations}
  
 \item For large $\ug$ we show, as with the flux, that $q_\gamma$ grows 
  linearly with $\|\ug\|$ when $\uhat(\alphahatg(\ug))$ is small and linearly 
  with $\u0hat(\alphahatg(\ug))$ otherwise.
  Indeed, if for $\delta \in (0, 1)$ we have
  $\|\uhat(\alphahatg(\ug))\| \le \delta \|\ug\|$, then
  \begin{align}\label{eq:q-iii-a}
   q_\gamma(\ug, \bu) \le M C_{\br} \delta \|\ug\|
    + r_M C_{h'', M} (\|\ug\| + u_M),
  \end{align}
  i.e., linear growth in $\|\ug\|$ as in \eqref{eq:fg-rel-iii-a} and
  \eqref{eq:J-iii-a}.
  On the other hand, when ${\|\uhat(\alphahatg(\ug))\| > \delta \|\ug\|}$, 
  then
  \begin{align}\label{eq:q-iii-b}
   q_\gamma(\ug, \bu) \le M C_{\br} C_0 \u0hat(\alphahatg(\ug))
    + r_M C_{h'', M} \left( \frac{C_0}{\delta}\u0hat(\alphahatg(\ug))
     + u_M \right),
  \end{align}
  which is linear growth in $\u0hat(\alphahatg(\ug))$ as in
  \eqref{eq:fg-rel-iii-b} and  \eqref{eq:J-iii-b}.
\end{enumerate}

Now we simply need to change the numerators from the flux case above to get
\eqref{eq:src-bnd-lem-super} for
\begin{align}
 C_q &\ge \max\left\{ \frac{(C_{h'', \widetilde L} C_{\br} + r_M C_{h'''})
                            + \frac{C_{h'', \widetilde L} C_{\br} M}2}
                           {\lambda_{\min, h'', \widetilde L}},
  \frac{C_{q, K, \gamma_0}}{C_{h, M, L}}, \right. \nonumber \\
 & \qquad \qquad \left. \frac{M C_{\br} \delta K + r_M C_{h'', M}
                              (K + u_M)}
   {\frac{(1 - \delta)^2}{\gamma_0} K^2 - M K + |V|\eta_{\min}
    - h_M- C_0 v_{0, M}},
  \frac{M C_{\br} \delta + r_M C_{h'', M}}
       {\frac{(1 - \delta)^2}{\gamma_0}K - M}, \right. \nonumber \\
 & \qquad \qquad \left.
  \frac{M C_{\br} \delta K + r_M C_{h'', M}(K + u_M)}
       {|V|\eta\left( \frac{\delta K}{|V| C_0} \right)
        - M K - h_M - M u_M},
  \frac{M C_{\br} C_0 + r_M C_{h'', M} \frac{C_0}{\delta}}
       {\eta'\left( \frac{\delta K}{|V| C_0} \right) - \frac{M C_0}\delta} 
  \right\}
\end{align}
for some $K$ and $\delta$ satisfying the same conditions as for the flux term, 
and
\begin{align}
 D_q := \frac{C_{h'', \widetilde L} C_{\br} M}2.
\end{align}

\end{proof}

Lemmas~\ref{lem:flux-bnd-super} and \ref{lem:src-bnd-super} yield a more 
precise version of Theorem~\ref{thm:main}:

\begin{cor}\label{cor:super}
Let $\eta$ be a superlinear kinetic entropy function and $\bu$ a Lipschitz  
solution of the entropy-based moment equations \eqref{eq:moment-eq} for which 
there exists $M \in (0, \infty)$ such that $\bu(t, x) \in \cR^M$ for all 
$(t, x) \in [0, T] \times X$.
Let $\{\ug\}_{\gamma\in (0, \gamma_0)} $ be a family of entropy solutions of 
\eqref{eq:reg-moment-eq}.
Furthermore assume that
$\|\nabla_x \bu\|_{L^\infty([0,T] \times X)} \le C_u$. 

Then
${\|\nabla_x \alphahatgu\|_{L^\infty([0,T] \times X)} \le C_{h'' , M} 
C_u}$, and
\begin{align}\label{eq:cor-super}
 \int_X \hg(\ug(T, x) | \bu(T, x)) \intdx \le \exp(C T) D T \gamma^2
\end{align}
for $C := C_{h'' , M} C_u C_{\bff}+ C_u C_J + C_q$ and
$D := C_u D_J + D_q$, where the constants $C_{\bff}$, $C_J$, $C_q$, 
$D_J$, and $D_q$ are given by Lemmas~\ref{lem:flux-bnd-super} and 
\ref{lem:src-bnd-super}.
\end{cor}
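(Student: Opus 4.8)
The plan is to obtain Corollary~\ref{cor:super} as a direct specialization of Theorem~\ref{thm:main} to the set $\cS = \cR^M$, so that the only real work is to check that every hypothesis of that theorem holds for this choice and then to read off the constants. The three structural estimates \eqref{eq:fs-bnd} are exactly supplied by the two preceding lemmas: Lemma~\ref{lem:flux-bnd-super} gives the bounds on $\|\fg(\ug | \bu)\|$ and $\|J_\gamma(\ug, \bu)\|$, with constants $C_{\bff}$, $C_J$, $D_J$ and, crucially, with \emph{no} $\gamma^2$ term in the relative-flux bound, i.e.\ $D_{\bff} = 0$; Lemma~\ref{lem:src-bnd-super} gives $q_\gamma(\ug, \bu) \le C_q \hg(\ug | \bu) + D_q \gamma^2$. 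All of these constants depend only on $M$ and $\gamma_0$, hence are uniform in $\ug$, in $\bu \in \cR^M$, and in $\gamma \in (0,\gamma_0)$, as Theorem~\ref{thm:main} requires, and $\bu(t,x) \in \cR^M$ for all $(t,x)$ is part of the corollary's hypothesis.

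Next I would establish the two gradient bounds. The bound $\|\nabla_x \bu\|_{L^\infty([0,T]\times X)} \le C_u$ is assumed. For $\nabla_x \alphahatgu$, recall $\alphahatg = \hg'$, so $(t,x) \mapsto \alphahatg(\bu(t,x))$ is the composition of the smooth map $\alphahatg$ with the Lipschitz field $\bu$; applying the chain rule pointwise almost everywhere gives $\nabla_x \alphahatgu = \hg''(\bu)\,\nabla_x \bu$. Since $\bu(t,x) \in \cR^M$ for every $(t,x)$, the Hessian estimate \eqref{eq:hess-bnds} yields $\|\hg''(\bu(t,x))\| \le C_{h'',M}$ uniformly for $\gamma \in (0,\gamma_0)$, and therefore
\begin{align*}
 \|\nabla_x \alphahatgu\|_{L^\infty([0,T]\times X)} \le C_{h'',M}\, C_u =: C_{\alphahat}
\end{align*}
uniformly in $\gamma$. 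This is both the first assertion of the corollary and the remaining hypothesis needed to invoke Theorem~\ref{thm:main}.

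With all hypotheses verified, Theorem~\ref{thm:main} immediately gives
\begin{align*}
 \int_X \hg(\ug(T,x) | \bu(T,x))\intdx \le \exp(CT)\,D\,T\,\gamma^2
\end{align*}
with $C = C_{\alphahat} C_{\bff} + C_u C_J + C_q$ and $D = C_{\alphahat} D_{\bff} + C_u D_J + D_q$; substituting $C_{\alphahat} = C_{h'',M} C_u$ and $D_{\bff} = 0$ produces exactly $C = C_{h'',M} C_u C_{\bff} + C_u C_J + C_q$ and $D = C_u D_J + D_q$, which is the stated form. There is no genuine obstacle in this argument: all of the analytic difficulty has already been absorbed into Lemmas~\ref{lem:flux-bnd-super} and \ref{lem:src-bnd-super} and, behind them, into the construction of the nested sets $\cR^M \subset\subset \cR^L \subset\subset B_K$ and the strict positivity of $C_{h,M,L}$ from Lemma~\ref{lem:C-M-L-positive}. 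The only points requiring a moment's care are the almost-everywhere chain-rule step for the Lipschitz-in-space field $\bu$ and the bookkeeping observation that the relative-flux bound carries no $\gamma^2$ term, which is why $D$ here loses the $C_{\alphahat} D_{\bff}$ contribution present in the general statement of Theorem~\ref{thm:main}.
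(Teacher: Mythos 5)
Your proposal is correct and follows exactly the route the paper intends: Corollary~\ref{cor:super} is obtained by invoking Theorem~\ref{thm:main} with $\cS = \cR^M$, supplying \eqref{eq:fs-bnd} via Lemmas~\ref{lem:flux-bnd-super} and \ref{lem:src-bnd-super} (with $D_{\bff}=0$), and bounding $\nabla_x \alphahatgu$ through the chain rule together with the Hessian bound \eqref{eq:hess-bnds}, which gives $C_{\alphahat} = C_{h'',M}C_u$ and hence the stated constants.
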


%

\begin{cor}\label{cor:super-L2}
Let $\eta$, $\bu$, and $\ug$ satisfy the conditions of 
Corollary~\ref{cor:super} and $L \in (M, \infty)$, and consider
\[ X_L := \{x  \in X \, : \, \ug(T,x) \in \cR^L\}. \]
Then the Lebesgue-measure of the complement of this set satisfies
\begin{align}\label{eq:X-minus-XL-bnd}
 |X \setminus X_L| \le \frac{\exp(C T) D T}{C_{h, M, L}} \gamma^2,
\end{align}
where $C_{h, M, L}$ is defined in \eqref{eq:C-M-L}.
Furthermore on $X_L$ we have
\begin{align}\label{eq:cor-L2}
 \| \ug(T, \cdot) - \bu(T, \cdot) \|_{L^2(X_L)}
  \le \sqrt{\frac{\exp(C T) D T}{\lambda_{\min, h'', \widetilde L}}} 
   \gamma
\end{align}
\end{cor}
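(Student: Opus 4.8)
The plan is to derive both bounds from the single integrated relative-entropy estimate of Corollary~\ref{cor:super}, namely $\int_X \hg(\ug(T,x)\,|\,\bu(T,x))\,dx \le \exp(CT)\,D\,T\,\gamma^2$, by splitting $X$ into $X_L$ and its complement and invoking on each piece one of the two pointwise lower bounds for the relative entropy that were already established inside the proof of \lemref{flux-bnd-super}. Throughout one uses that $\hg(\cdot\,|\,\cdot) \ge 0$ by convexity of $\hg$, so that restricting the integral of the relative entropy to a subset of $X$ only decreases it.

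First I would treat the measure estimate \eqref{eq:X-minus-XL-bnd}. On $X \setminus X_L$ we have $\ug(T,x) \in \R^{N+1} \setminus \cR^L$ while $\bu(T,x) \in \cR^M$ by assumption; hence \lemref{C-M-L-positive} applies and gives $\hg(\ug(T,x)\,|\,\bu(T,x)) \ge C_{h,M,L} > 0$ pointwise on $X \setminus X_L$, uniformly in $\gamma \in (0,\gamma_0)$. Integrating and using Corollary~\ref{cor:super},
\[
 C_{h, M, L}\, |X \setminus X_L|
  \le \int_{X \setminus X_L} \hg(\ug(T,x)\,|\,\bu(T,x))\,dx
  \le \int_X \hg(\ug(T,x)\,|\,\bu(T,x))\,dx
  \le \exp(CT)\,D\,T\,\gamma^2,
\]
and dividing by $C_{h,M,L}$ yields \eqref{eq:X-minus-XL-bnd}.

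Next comes the $L^2$ estimate \eqref{eq:cor-L2}. On $X_L$ we have $\ug(T,x) \in \cR^L$ and $\bu(T,x) \in \cR^M \subseteq \cR^L$, so the line segment joining the two moment vectors lies in $\conv(\cR^L)$ and the mean-value bound \eqref{eq:rel-ent-bnd-below-L} applies, giving $\hg(\ug(T,x)\,|\,\bu(T,x)) \ge \lambda_{\min, h'', \widetilde L}\,\|\ug(T,x) - \bu(T,x)\|^2$ pointwise on $X_L$. Integrating over $X_L$ and bounding the right-hand side again by $\int_X \hg(\ug(T,x)\,|\,\bu(T,x))\,dx$,
\[
 \lambda_{\min, h'', \widetilde L}\, \|\ug(T,\cdot) - \bu(T,\cdot)\|_{L^2(X_L)}^2
  \le \int_{X_L} \hg(\ug(T,x)\,|\,\bu(T,x))\,dx
  \le \exp(CT)\,D\,T\,\gamma^2,
\]
so dividing by $\lambda_{\min, h'', \widetilde L}$ and taking a square root gives \eqref{eq:cor-L2}.

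I do not expect any serious obstacle here: this is a Markov-type argument fed entirely by ingredients already in hand. The only points requiring a little care are that \lemref{C-M-L-positive} is invoked in the correct configuration (first slot of the relative entropy outside $\cR^L$, second slot in $\cR^M$) and uniformly for $\gamma \in (0,\gamma_0)$; that on $X_L$ the relevant segment stays inside the convex set $\conv(\cR^L)$ so that \eqref{eq:rel-ent-bnd-below-L} does hold with the constant $\lambda_{\min, h'', \widetilde L}$ — this is where the choice $L > M$, hence $\cR^M \subseteq \cR^L$, enters; and that the constants $C$ and $D$ appearing in the two displays are precisely those inherited from Corollary~\ref{cor:super}.
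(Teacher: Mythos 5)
Your proposal is correct and follows exactly the paper's argument: the measure bound comes from the pointwise lower bound $C_{h,M,L}$ of \lemref{C-M-L-positive} on $X\setminus X_L$ combined with Corollary~\ref{cor:super}, and the $L^2$ bound comes from \eqref{eq:rel-ent-bnd-below-L} on $X_L$. The only difference is that you spell out the nonnegativity of the relative entropy and the configuration checks that the paper leaves implicit.
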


\begin{proof}
Recall the definition of $C_{h, M, L}$ in \eqref{eq:C-M-L}.
Then we have
\begin{align}
 C_{h, M, L} |X \setminus X_L|
  \le \int_{X \setminus X_L} \hg(\ug(T, x) | \bu(T, x)) \intdx
  \stackrel{\eqref{eq:cor-super}}{\le} \exp(C T) D T \gamma^2,
\end{align}
from which we conclude \eqref{eq:X-minus-XL-bnd}. 
The inequality \eqref{eq:cor-L2} follows immediately from 
\eqref{eq:rel-ent-bnd-below-L} (cf.\ \eqref{eq:hess-bnds}).
\end{proof}

\subsection{The sublinear case}\label{sec:mods-for-be-entropy}

In contrast to the Maxwell--Boltzmann-like entropies, the entropies $\eta$ we 
consider in this class are not bounded from below but
${\lim_{z \to \infty} \eta'(z) = 0}$.
Consequently ${\dom(\etad) \subseteq (-\infty, 0)}$, and the multipliers must 
satisfy $\bsalpha \cdot \bm < 0$ for all $v \in V$.
For such entropies we replace the assumption in \eqref{eq:ass-M} with
\begin{align}\label{eq:ass-Mm}
 \bu \in \cR^{M, m} := \left\{ \uhat(\bsalpha) : \|\bsalpha\| \le M
                               \text{ and } \bsalpha \cdot \bm(v) \le -m
                               \text{ for all } v \in V
                       \right\},
\end{align}
for some $M \in (0, \infty)$ and $m \in (0, \infty)$.
Note that as $M \to \infty$ and $m \to 0$, the set $\cR^{M,m}$ approaches the 
full realizable set $\cR$.
Related to the parameter $m$ is
\begin{align}
 p_0 := -\sup_{\substack{\bw \in \cR^{M, m} \\ \gamma \in (0, \gamma_0)
                         \\ v \in V}}
  \alphahatg(\bw) \cdot \bm(v) > 0;
\end{align}
for some $\gamma_0 \in (0, \infty)$.

The additional condition parameterized by $m$ in $\cR^{M,m}$ ensures that the 
ansatz $\Ga$ in the sublinear case is bounded away from zero.
In the superlinear case, the ans\"atze $\Ga$ for $\bsalpha \in \cR^M$ are 
already bounded away from zero, but this is not so for the sublinear case, 
where $\bsalpha$ must also fulfill $\bsalpha \cdot \bm(v) < 0$ for all
$v \in V$.

Note that $u_{M,m}$ and $h_{M,m}$ can be defined as in \eqref{eq:uM}:
\begin{align}
 u_{M,m} := \sup_{\bw \in \cR^{M,m}} \|\bw\|
  \qquand
 h_{M,m} := \sup_{\bw \in \cR^{M,m}} h(\bw);
\end{align}
both are finite.
We can also derive similar bounds on $\hg''$.
Let $\bc$ be the unit-length eigenvector associated with the largest eigenvalue 
of $(\hg)''_*(\alphahatgu)$ for some $\bu \in \cR^{M,m}$
\begin{subequations}
\begin{align}
 \lambda_{\max}((\hg)''_*(\alphahatgu)) &= \bc \cdot \left(
   \Vint{\bm \bm^T \etad''(\alphahatgu \cdot \bm)} + \gamma I \right) \bc \\
  &= \Vint{(\bc \cdot \bm)^2
   \etad''(\alphahatgu \cdot \bm)} + \gamma \\
  &\le |V| \left( \sup_{y \in [-M, -p_0]} \etad''(y) \right) + \gamma_0
\end{align}
\end{subequations}
for $\gamma \le \gamma_0$.
Similarly, if we now let $\bc$ be the unit-length eigenvector associated with 
the smallest eigenvalue of $(\hg)''_*(\alphahatgu)$ for $\bu \in \cR^{M,m}$ we 
have
\begin{align}
 \lambda_{\min}((\hg)''_*(\alphahatgu))
  = \Vint{(\bc \cdot \bm)^2 \etad''(\alphahatgu \cdot \bm)} + \gamma
  \ge |V| \inf_{y \in [-M, -p_0]} \etad''(y).
\end{align}

Thus for the sublinear case there exist positive constants
$\lambda_{\min, h'', M, m}$
and $C_{h'',  M, m}$ such that
\begin{align}\label{eq:hess-bnds-sub}
 \aligned
  \bv \cdot \hg''(\bu)\bv &\ge \lambda_{\min, h'', M, m} \|\bv\|^2 \\
  \|\hg''(\bu)\| &\le C_{h'', M, m}
 \endaligned
 \quad
 \text{for all } \bv \in \R^{N + 1} \text{, } \bu \in \cR^M
  \text{, and } \gamma \in (0, \gamma_0);
\end{align}
(cf.\ the superlinear case \eqref{eq:hess-bnds}).


\begin{lemma}\label{lem:bnds-sub}
Let $\eta$ be a sublinear kinetic entropy function, $M \in (0, \infty)$,
$m \in (0, \infty)$, and $\gamma_0 \in (0, \infty)$.
Then there exist positive constants $C_J$, $D_J$, $C_{\bF}$, and $D_{\bF}$ such 
that
\begin{equation}\label{eq:bnds-lem-sub}
 \aligned
 \|\fg(\ug | \bu)\| &\le C_{\bff} \hg(\ug | \bu)
   \\
 \|J_\gamma(\ug, \bu)\| &\le C_{J} \hg(\ug | \bu) + D_{J} \gamma^2
   \\
 q_\gamma(\ug, \bu) &\le C_q \hg(\ug | \bu) + D_q \gamma^2 
 \endaligned
 \quad \forall \ug \in \R^{N+1}, \bu \in \cR^{M, m}, \gamma \in (0,\gamma_0).
 \end{equation}
\end{lemma}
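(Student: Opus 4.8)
The plan is to follow the three-region argument of the superlinear case (Lemmas~\ref{lem:flux-bnd-super} and \ref{lem:src-bnd-super}), making the modifications forced by the fact that a sublinear $\eta$ has no global minimum. First I would fix $L \in (M, \infty)$ and $\ell \in (0, m)$ and enlarge the admissible set to $\cR^{L,\ell} := \{\uhat(\bsalpha) : \|\bsalpha\| \le L,\ \bsalpha \cdot \bm(v) \le -\ell \text{ for all } v \in V\}$. As for $\cR^{M,m}$ in \eqref{eq:ass-Mm}, this set is compact: the constraint $\bsalpha \cdot \bm \le -\ell < 0$ keeps it away from the part of $\partial\cR$ reached as $\bsalpha \cdot \bm \to 0^-$ and $\|\bsalpha\| \le L$ away from the rest, so $\cR^{M,m} \subset\subset \cR^{L,\ell} \subset\subset \cR$, and since $\cR$ is convex, $\conv(\cR^{L,\ell}) \subset\subset \cR$ too. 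I would then partition $\R^{N+1}$ into $\cR^{L,\ell}$, a norm ball $B_K$ minus $\cR^{L,\ell}$, and $\R^{N+1} \setminus B_K$, and on each piece bound $\|\fg(\ug|\bu)\|$, $\|J_\gamma(\ug,\bu)\|$, $q_\gamma(\ug,\bu)$ from above and $\hg(\ug|\bu)$ from below, with $\bu \in \cR^{M,m}$ throughout.

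On $\ug \in \cR^{L,\ell}$ the argument is verbatim case~(i) of the superlinear proofs: $\fg$, $\hg$ and the relevant derivatives are bounded on $\conv(\cR^{L,\ell})$ uniformly in $\gamma \in (0,\gamma_0)$ (using monotonicity of $\|\alphahatg\|$ in $\gamma$, continuity of $\alphahatg$ in $\gamma$, and \eqref{eq:hess-bnds-sub} with $(L,\ell)$ in place of $(M,m)$), so a Taylor/mean-value expansion gives $\|\fg(\ug|\bu)\| \le C\|\ug-\bu\|^2$ and, using the accuracy estimate $\|\uhat(\alphahatgu)-\bu\| \le M\gamma$ of \cite[Thm.~2]{AllFraHau19}, $\rg = \br \circ \uhat \circ \alphahatg$, Lipschitz continuity of $\br$, and Young's inequality, $\|J_\gamma(\ug,\bu)\| \le C\|\ug-\bu\|^2 + C\gamma^2$ and $q_\gamma(\ug,\bu) \le C\|\ug-\bu\|^2 + C\gamma^2$, while \eqref{eq:hess-bnds-sub} gives $\hg(\ug|\bu) \ge \lambda_{\min,h'',L,\ell}\|\ug-\bu\|^2$. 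On $\ug \in B_K \setminus \cR^{L,\ell}$, compactness of $B_K \times \cR^{M,m}$ and the growth bounds of Section~\ref{sec:basic} (plus $\hg'(\ug) \cdot \rg(\ug) \le 0$, which controls the $q_\gamma$ term as in \lemref{src-bnd-super}) make the suprema of the three quantities finite, and a sublinear analogue of \lemref{C-M-L-positive} --- that $\inf \hg(\bv|\bu)$ over $\bv \notin \cR^{L,\ell}$, $\bu \in \cR^{M,m}$, $\gamma \in (0,\gamma_0)$ is strictly positive, which is exactly why $L > M$ and $\ell < m$ were chosen --- bounds $\hg(\ug|\bu)$ below by a positive constant. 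I would establish this analogue by the same compactness/continuity argument as in Appendix~\ref{sec:C-M-L-positive}.

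The real work, and the place where the sublinear case genuinely differs, is the exterior region $\ug \in \R^{N+1} \setminus B_K$. For the upper bounds, $\fg = \bff \circ \uhat \circ \alphahatg$ together with \eqref{eq:f-u-bnd} and \eqref{eq:C0} gives $\|\fg(\ug)\| \le C_1 C_0\, \u0hat(\alphahatg(\ug))$, and then (with $\|\fg'(\bu)\| \le C_1$, $\bu \in \cR^{M,m}$ compact, $\|\br(\bw)\| \le C_{\br}\|\bw\|$, and \eqref{eq:hess-bnds-sub}) each of $\|\fg(\ug|\bu)\|$, $\|J_\gamma(\ug,\bu)\|$, $q_\gamma(\ug,\bu)$ is at most a constant times $\u0hat(\alphahatg(\ug)) + \|\ug\|$ plus a constant. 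For the lower bound on $\hg(\ug|\bu)$ one cannot use a minimum of $\eta$; instead I would use that a sublinear $\eta$ decays at most linearly with vanishing slope (since $\eta' < 0$ is increasing to $0$ and $\eta$ is bounded below near $0$), so $\eta(z) \ge -\varepsilon z - C_\varepsilon$ for every $\varepsilon > 0$, whence $h(\uhat(\alphahatg(\ug))) = \Vint{\eta(\eta'_*(\alphahatg(\ug)\cdot\bm))} \ge -\varepsilon\, \u0hat(\alphahatg(\ug)) - C_\varepsilon |V|$. Writing $\hg(\ug|\bu) = h(\uhat(\alphahatg(\ug))) + \tfrac\gamma2 \|\alphahatg(\ug)\|^2 - \hg(\bu) - \alphahatg(\bu) \cdot (\ug-\bu)$, using \eqref{eq:1st-ord-necc} for $\ug$ and the constraint $\alphahatg(\bu) \cdot \bm(v) \le -p_0 < 0$ (valid for $\bu \in \cR^{M,m}$) to get $-\alphahatg(\bu)\cdot\ug \ge p_0\, \u0hat(\alphahatg(\ug)) - \gamma M \|\alphahatg(\ug)\|$, choosing $\varepsilon = p_0/2$, and completing squares ($\tfrac\gamma2 x^2 - \gamma M x \ge -\tfrac{\gamma_0 M^2}{2}$ and $\tfrac\gamma4 x^2 \ge \gamma x - \gamma_0$), I expect the clean global lower bound
\[ \hg(\ug|\bu) \ge \tfrac{p_0}{2}\, \u0hat(\alphahatg(\ug)) + \gamma \|\alphahatg(\ug)\| - C \ge c\, \|\ug\| - C \]
for constants $c, C > 0$ independent of $\ug \in \R^{N+1}$, $\bu \in \cR^{M,m}$, $\gamma \in (0,\gamma_0)$ (the second inequality because $\|\ug\| \le C_0 \u0hat(\alphahatg(\ug)) + \gamma \|\alphahatg(\ug)\|$). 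Since the flux, $J_\gamma$ and $q_\gamma$ terms grow at most linearly in $\u0hat(\alphahatg(\ug))$ and $\|\ug\|$ while $\hg(\ug|\bu)$ grows at least linearly in both, a \lemref{fz}-type argument --- taking $K$ large enough to absorb the constant --- yields the required bounds with no $\gamma^2$ contribution from this region. Collecting the three regions then gives \eqref{eq:bnds-lem-sub}, with $C_{\bff}$, $C_J$, $C_q$ the maxima of the corresponding constants over the three pieces and $D_J$, $D_q$ coming from region~(i) only.

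I expect the main obstacle to be precisely this exterior estimate: in the superlinear case the superlinear growth (hence boundedness below) of $\eta$ did all the work there, and once that is gone the new ingredient is the positive term $p_0\, \u0hat(\alphahatg(\ug))$ extracted from the relative entropy via the defining constraint $\alphahatg(\bu) \cdot \bm \le -p_0$ of $\cR^{M,m}$, which must be shown to dominate the at-most-linear negative contribution of $\eta$; quantifying the sublinear decay of $\eta$ and combining it with the quadratic term $\tfrac\gamma2 \|\alphahatg(\ug)\|^2$ (to also control large $\|\ug\|$ while keeping constants uniform in $\gamma \le \gamma_0$) is the crux. Everything else --- the region~(i) Taylor estimates, the region~(ii) compactness, and the sublinear analogue of \lemref{C-M-L-positive} --- should go through exactly as in the superlinear treatment.
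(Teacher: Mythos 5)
Your proposal is correct and follows essentially the same route as the paper: the same three-region decomposition, the same compactness arguments on $\cR^{L,\ell}$ and $B_K$, and --- crucially --- the same key ingredient for the exterior region, namely extracting the positive term $p_0\,\u0hat(\alphahatg(\ug))$ from $-\alphahatgu\cdot\uhat(\alphahatg(\ug))$ via the constraint $\alphahatgu\cdot\bm\le -p_0$, $\etad'\ge 0$, and \eqref{eq:1st-ord-necc}. The only real difference is cosmetic: where the paper splits the exterior region into the two subcases $\|\uhat(\alphahatg(\ug))\|\lessgtr\delta\|\ug\|$ and applies \lemref{fz} to two different convex functions, you fold both mechanisms into a single lower bound using the supporting-line estimate $\eta(z)\ge-\varepsilon z-C_\varepsilon$ with $\varepsilon=p_0/2$, which is a mild and legitimate streamlining.
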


\begin{proof}
Because it is so similar to the superlinear case, we only sketch the 
proof for the sublinear case.

For sublinear $\eta$, the estimates of $\fg(\ug | \bu)$, $J_\gamma(\ug, \bu)$, 
$q_\gamma(\ug, \bu)$ and $\hg(\ug | \bu)$ on ${B_K \times \cR^{M,m}}$ can be 
derived just as in the superlinear case for ${B_K \times \cR^M}$, as well as 
the estimates of $\fg(\ug | \bu)$, $J_\gamma(\ug, \bu)$, and $q_\gamma(\ug, 
\bu)$ for large $\ug$ in 
\eqref{eq:fg-rel-iii-a}, \eqref{eq:J-iii-a}, \eqref{eq:q-iii-a}, 
\eqref{eq:fg-rel-iii-b}, \eqref{eq:J-iii-b}, and \eqref{eq:q-iii-b}.
The lower bound of $\hg(\ug | \bu)$, however, in \eqref{eq:hg-lb-iii-a} is no 
longer possible when $\eta$ is not bounded from below, and \lemref{fz} can no 
longer be applied to \eqref{eq:hg-lb-iii-b} because the right-hand side does 
not grow as $\u0hat(\alphahatg(\ug)) \to \infty$.

In the subcase $\|\uhat(\alphahatg(\ug))\| \le \delta \|\ug\|$, we first apply 
a combination of the bounds on individual terms from above to get
\begin{subequations}
\begin{align}
 \hg(\ug | \bu) &\ge |V| \eta\left(\frac1{|V|} \u0hat(\alphahatg(\ug))\right)
   + \frac{(1 - \delta)^2}{2\gamma_0} \|\ug\|^2 - h_{M,m} - M \|\ug\|
   - M u_{M,m}.
\end{align}
Now we recognize that $\u0hat(\alphahatg(\ug)) \le \|\uhat(\alphahatg(\ug))\|$
and then use that $\eta$ is a monotonically decreasing function to conclude
\begin{align}
 \hg(\ug | \bu) &\ge |V|\eta\left(\frac\delta{|V|} \|\ug\| \right)
   + \frac{(1 - \delta)^2}{2\gamma_0} \|\ug\|^2 - h_{M,m} - M \|\ug\|
   - M u_{M,m}.
\end{align}
\end{subequations}
Now thanks to the convexity of $\eta$, we can apply \lemref{fz} with
\begin{align}
 f(z) = |V|\eta\left(\frac{\delta z}{|V|}\right)
  + \frac{(1 - \delta)^2 z^2}{2 \gamma_0} - M z.
\end{align}

In the other subcase, where ${\|\uhat(\alphahatg(\ug))\| > \delta \|\ug\|}$, we 
use the assumption \eqref{eq:ass-Mm} and the first-order necessary condition 
\eqref{eq:1st-ord-necc} to get
\begin{subequations}
\begin{align}
 \hg(\ug | \bu) &= h(\uhat(\alphahatg(\ug)))
   + \frac\gamma 2 \|\alphahatg(\ug)\|^2
   - h(\uhat(\alphahatgu)) - \frac\gamma 2 \|\alphahatgu\|^2 \nonumber \\
  &\qquad - \alphahatgu \cdot \big( \uhat(\alphahatg(\ug))
    + \gamma \alphahatg(\ug)
    - \uhat(\alphahatgu)
    - \gamma \alphahatg(\bu)
    \big) \\
  &= h(\uhat(\alphahatg(\ug))) - h(\uhat(\alphahatgu))
   - \alphahatgu \cdot \left( \uhat(\alphahatg(\ug))
    - \uhat(\alphahatgu) \right) \nonumber \\
  &\qquad + \frac\gamma 2 \left\| \alphahatg(\ug) - \alphahatgu \right\|^2 \\
  &\ge |V|\eta\left(\frac1{|V|} \u0hat(\alphahatg(\ug))\right) - h_{M,m}
   - \alphahatgu \cdot \uhat(\alphahatg(\ug)) - M u_{M,m} \\
  &= |V|\eta\left(\frac1{|V|} \u0hat(\alphahatg(\ug))\right) - h_{M,m}
   - \Vint{\alphahatgu \cdot \bm \etad'(\alphahatg(\ug) \cdot \bm)} - M u_{M,m} 
   \\
  &\ge |V|\eta\left(\frac1{|V|} \u0hat(\alphahatg(\ug))\right) - h_{M,m}
   + p_0 \u0hat(\alphahatg(\ug)) - M u_{M,m}.
\end{align}
\end{subequations}
In the last step we have used $\etad' \ge 0$.
We are again ready to apply \lemref{fz} with
${f(z) = |V|\eta(z / |V|) + p_0 z}$ to derive conditions on $K$, $C_{\bff}$, and 
$C_J$ to achieve the desired estimate \eqref{eq:bnds-lem-sub}.

\end{proof}

This immediately gives the following corollary.

\begin{cor}\label{cor:sub}
Let $\eta$ be a sublinear kinetic entropy function and $\bu$ a Lipschitz continuous solution of 
the entropy-based moment equations \eqref{eq:moment-eq} for which there exist $M \in (0, \infty)$ and $m \in (0, \infty)$ so that
$\bu(t, x) \in \cR^{M,m}$ 
for all $(t, x) \in [0, T] \times X$.
Let $\{\ug\}_{\gamma \in (0,\gamma_0)}$ be a family of entropy solutions of \eqref{eq:reg-moment-eq} for
$\gamma \in (0, \gamma_0)$.
Furthermore assume that
$\|\nabla_x \bu\|_{L^\infty([0,T] \times X)} \le C_u$. 

Then
${\|\nabla_x \alphahatgu\|_{L^\infty([0,T] \times X)} \le C_{h'', M, m} 
C_u}$, and
\begin{align}
 \int_X \hg(\ug(T, x) | \bu(T, x)) \intdx \le \exp(C T) D T \gamma^2
\end{align}
for $C := C_{\alphahat} C_{\bff}+ C_u C_J + C_q$ and
$D := C_u D_J + D_q$, where the constants $C_{\bff}$, $C_J$, $C_q$, 
$D_J$, and $D_q$ are given by \lemref{bnds-sub}.
\end{cor}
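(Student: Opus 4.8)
The plan is to obtain this corollary as an immediate specialization of \thmref{main} to $\cS = \cR^{M,m}$, once the hypotheses of that theorem have been checked in the sublinear setting. The three structural estimates \eqref{eq:fs-bnd} are exactly the content of \lemref{bnds-sub} with $\cS = \cR^{M,m}$: it supplies constants $C_{\bff}, C_J, C_q$ and $D_J, D_q$, valid for all $\ug \in \R^{N+1}$, all $\bu \in \cR^{M,m}$, and all $\gamma \in (0, \gamma_0)$, with the flux bound carrying no $\gamma^2$ term (so effectively $D_{\bff} = 0$ when we invoke \thmref{main}). The remaining hypotheses are assumed directly in the statement: $\bu$ is a Lipschitz continuous solution of \eqref{eq:moment-eq} with $\bu(t,x) \in \cR^{M,m}$ for all $(t,x)$, the family $\{\ug\}_{\gamma \in (0,\gamma_0)}$ consists of entropy solutions of \eqref{eq:reg-moment-eq}, and $\|\nabla_x \bu\|_{L^\infty([0,T] \times X)} \le C_u$.

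The one ingredient still to be supplied is the uniform bound on $\nabla_x \alphahatgu$. Since $\hg' = \alphahatg$ we have $\alphahatg'(\bu) = \hg''(\bu)$, and \eqref{eq:hess-bnds-sub} gives $\|\hg''(\bu)\| \le C_{h'', M, m}$ for all $\bu \in \cR^{M,m}$ and all $\gamma \in (0, \gamma_0)$. Because $\bu(t,x) \in \cR^{M,m}$ everywhere, the chain rule gives $\nabla_x \alphahatgu = \hg''(\bu) \nabla_x \bu$ pointwise a.e., so $\|\nabla_x \alphahatgu\|_{L^\infty([0,T] \times X)} \le C_{h'', M, m} C_u =: C_{\alphahat}$ uniformly in $\gamma \in (0, \gamma_0)$; in particular this bound does not degenerate as $\gamma \to 0$.

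With every hypothesis verified, \thmref{main} applies verbatim and yields
\[
 \int_X \hg(\ug(T, x) | \bu(T, x)) \intdx \le \exp(C T)\, D T\, \gamma^2
\]
with $C = C_{\alphahat} C_{\bff} + C_u C_J + C_q$ and, since $D_{\bff} = 0$, $D = C_u D_J + D_q$, which are exactly the constants appearing in the statement.

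I do not anticipate a genuine obstacle here: the corollary is a packaging step combining \lemref{bnds-sub} and \thmref{main}, entirely parallel to the superlinear Corollary~\ref{cor:super}. The only points that deserve a moment's care are (a) that the $L^\infty$ bound on $\nabla_x \alphahatgu$ is genuinely uniform in $\gamma$, which rests on the uniform Hessian estimate \eqref{eq:hess-bnds-sub} (and behind it on $\|\alphahatg(\bu)\|$ being a decreasing function of $\gamma$ and on the continuity of $\gamma \mapsto \alphahatg$ down to $\gamma = 0$), and (b) that $D_{\bff}$ drops out of $D$ because \lemref{bnds-sub} delivers the relative-flux bound with no additive $\gamma^2$ term.
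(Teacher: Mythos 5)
Your proposal is correct and follows exactly the route the paper intends: the paper offers no separate proof of Corollary~\ref{cor:sub} beyond the remark that Lemma~\ref{lem:bnds-sub} ``immediately gives'' it, and you supply precisely the missing packaging --- verifying the hypotheses of Theorem~\ref{thm:main} with $\cS = \cR^{M,m}$, obtaining the uniform gradient bound $\|\nabla_x \alphahatgu\|_{L^\infty} \le C_{h'',M,m} C_u$ from \eqref{eq:hess-bnds-sub} and the chain rule, and noting that $D_{\bff}=0$ so that $D = C_u D_J + D_q$. This mirrors the superlinear Corollary~\ref{cor:super} and the remark following Theorem~\ref{thm:main}, so no further comment is needed.
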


Under the assumptions of Corollary~\ref{cor:sub} a result analogous to that of 
Corollary~\ref{cor:super-L2} is easy to prove.

\section{Numerical Results}
\label{sec:num-results}

We consider the toy problem from \cite{Hauck2010,AllFraHau19}.
There the authors considered the moment equations for the linear kinetic 
equations in slab geometry (see e.g., \cite{Lewis-Miller-1984}):
\begin{align}\label{eq:slab}
 \partial_t f + v \partial_x f = \sig{s}\left( \frac12 \Vint{f} - f \right).
\end{align}
where $V = [-1, 1]$ and $\sig{s} \in [0, \infty)$.
For the spatial domain we take $X = [0, 1]$.

\begin{remark}
More generally, the kinetic equation includes terms for absorption of particles 
by a background medium as well as a source term, as in 
\cite{Hauck2010,AllFraHau19}.
While the resulting terms in the moment equations can be straightforwardly 
incorporated into our analysis and do not affect our main results, we have left 
them out in this work for clarity of exposition.
\end{remark}

The corresponding entropy-based moment equations are
\begin{align}\label{eq:mn-slab}
 \partial_t \bu + \partial_x \bff(\bu) = \sig{s}R\bu,
\end{align}
where $R = \diag\{0, -1, \dots , -1\}$.
For the basis functions we used the Legendre polynomials.
We used the Maxwell--Boltzmann entropy \eqref{eq:mb}.
The collision term $\br(\bu) = \sig{s} R \bu$ clearly satisfies
\assref{r-lip}.

For numerical computations we used the fourth-order Runge--Kutta discontinuous 
Galerkin (RKDG) method as in \cite{AllFraHau19} with 160 spatial cells and no 
slope limiter.
With this spatial resolution the numerical solutions were accurate enough to 
observe the convergence in $\gamma$.

The initial conditions we used are constructed as follows.
Let ${\omega(x) := \frac12 M_0(1 + \cos(2 \pi x))}$ be a periodic function 
which we 
use to define the multiplier vector
\begin{align}
 \bsbeta(x) = \begin{pmatrix}
                  \log\left( \frac{\omega(x)}{2 \sinh(\omega(x))} \right) \\
                  \omega(x) \\
                  0 \\
                  \vdots \\
                  0
                 \end{pmatrix}
\end{align}
Then the initial conditions are given by
\begin{align}
 \bu^0(x) = \Vint{\bm \exp(\bsbeta(x) \cdot \bm)}.
\end{align}
Note that $\beta_0$ is chosen such that the zeroth order moment of the initial 
condition is one, i.e., $u^0_0(x) \equiv 1$.
The solution $\bu(t, x)$ of the original entropy-based moment equations with 
these initial conditions satisfies the assumption ${\bu(t, x) \in \cR^M}$ of 
Corollary~\ref{cor:super} for $M \approx M_0$.
Indeed the maximum value of $\|\alphahat(\bu(t, x))\|$ in space tends to 
decrease as time advances depending on the value of $\sig{s}$: the larger 
$\sig{s}$ is, the faster the norms of the multipliers decrease in time.
For $\sig{s} = 0$, the value of $\max_x \|\alphahat(\bu(t, x))\|$ is nearly 
constant in time.

We ran the solutions until the final time $T = 0.1$.
We used various values of $N$ up to 15 and found that the results did not 
depend qualitatively on the value of $N$.
We tried several values of $\sig{s}$ from zero to one and here did observe that 
the results depended on the value of $\sig{s}$:
for very small values of $\sig{s}$ the solutions appear not to enter the 
regime of second-order convergence until $\gamma$ is very small, on the order 
of $10^{-11}$.
Such values of $\gamma$ are so small, that for these solutions the error due to 
the numerical optimizer started to dominate errors due to the regularization.

We remind the reader that in order to evaluate the flux function $\fg$ we 
compute the multiplier vector $\alphahatg$ by numerically solving the dual 
problem \eqref{eq:reg-mult}.
We used the numerical optimizer described in \cite{AllFraHau19} but found that 
for problems with $\sig{s} = 0$ the value of tolerance $\tau$ on the norm of the 
dual gradient used in the stopping criterion, namely $\tau = 10^{-7}$, was not 
small enough to observe convergence in $\gamma$ for the very small values of 
$\gamma$ where the equations enter the regime of second-order convergence.
The difficulty here is that, in our experience, one cannot reliably 
bring the norm of the dual gradient below $10^{-7}$ when the norm of the 
multipliers at the solution is about ten or bigger.

But for smaller values of $M_0$, we found that using a combination of the 
smaller tolerance $\tau = 10^{-8}$ as well as modifying the optimizer to make 
efforts to further reduce the norm of the dual gradient when possible allowed 
us to decrease the numerical errors from the optimizer enough so that we could 
observe near second-order convergence.
This modification is described in pseudocode in Algorithm~\ref{alg:modstop} and 
works as follows:
The optimizer runs as usual until the norm of the dual gradient is smaller than 
$\tau$.
Then, the optimizer continues to take up to $\ell_{\max}$ additional iterations 
to bring the norm of the dual gradient under the smaller tolerance ${\tau_d \in 
(0, \tau)}$, which we call the \emph{desired} tolerance.
If the optimizer is unable to bring the norm of the dual gradient under 
$\tau_d$ in $\ell_{\max}$ additional iterations, the optimizer still exits 
successfully (as long as the current multiplier vector still satisfies the 
original stopping criterion).
In all of the results reported here, we used ${\tau_d = 10^{-11}}$ and 
${\ell_{\max} = 10}$.

\begin{algorithm}
\caption{The optimizer with modified stopping criterion}
\label{alg:modstop}
\begin{algorithmic}
\STATE $k \gets 0$
\STATE $\ell \gets 0$
\STATE ${\tt acceptable\_tolerance\_achieved} = $ false
\WHILE{$k < k_{\max}$}
 \IF{($\| \uhat(\bsalpha_k) + \gamma \bsalpha_k - \bu \| < \tau_d$)
     or ($\| \uhat(\bsalpha_k) + \gamma \bsalpha_k - \bu \| < \tau$
         and $\ell > \ell_{\max}$)}
  \RETURN $\bsalpha_k$
 \ENDIF
 \IF{${\tt acceptable\_tolerance\_achieved} = $ false
           and $\| \uhat(\bsalpha_k) + \gamma \bsalpha_k - \bu \| < \tau$}
  \STATE ${\tt acceptable\_tolerance\_achieved} \gets $ true
 \ENDIF
 \STATE Compute search direction $\bd_k$
 \STATE Perform backtracking line search to determine backtracking parameter 
  $\xi_k$
 \STATE $\bsalpha_{k + 1} \gets \bsalpha_k + \xi_k \bd_k$
 \STATE $k \gets k + 1$
 \IF{${\tt acceptable\_tolerance\_achieved} = $ true}
  \STATE $\ell \gets \ell + 1$
 \ENDIF
\ENDWHILE
\end{algorithmic}
\end{algorithm}

The results are given in Tables~\ref{tab:sig-1}~to~\ref{tab:sig-0}, which 
include the errors between the solution of the original equations and the 
solutions of the regularized equations measured in the relative entropy as well 
as in the $L^2$ and $L^\infty$ norms.
The error measured in the relative entropy is
\begin{align}
 \cH_\gamma(\ug | \bu) := \int_X \hg(\ug(T, x) | \bu(T, x))
  \intdx,
\end{align}
where we compute $\hg(\ug | \bu)$ numerically using the formula
\begin{align}\label{eq:rel-hg-num}
 \hg(\ug | \bu) = \Vint{\eta(G_{\alphahatg(\ug)}|G_{\alphahatg(\bu)})}
  + \frac \gamma 2 \| \alphahatg(\ug) - \alphahatg(\bu) \|^2,
\end{align}
where
\begin{subequations}
\begin{align}
 \eta(\Ga | \Gb) :=&\; \eta(\Ga) - \eta(\Gb) - \eta'(\Gb)(\Ga - \Gb) \\
  =&\; \eta(\Ga) - \eta(\Gb) - (\bsbeta \cdot \bm)(\Ga - \Gb).
\end{align}
\end{subequations}
(For the second line we have used $\Gb = \etad'(\bsbeta \cdot \bm)$ and
$\eta' \circ \etad' = \id$.)
Formula \eqref{eq:rel-hg-num} for $\hg(\ug | \bu)$ can be deduced by inserting 
\eqref{eq:hg-jg} into \eqref{eq:rel-hg} and simplifying, and it ensures the 
positivity of $\hg(\ug | \bu)$ despite errors due to the approximate 
computation of $\alphahatgu$.
The spatial integrals are computed using an eight-point Gauss quadrature on 
each of four subintervals in each spatial cell.
The observed convergence order $\nu$ between solutions computed with $\gamma_1$ 
and $\gamma_2$ is given by
\begin{align}
 \frac{\cH_{\gamma_1}(\bu_{\gamma_1} | \bu)}
      {\cH_{\gamma_2}(\bu_{\gamma_2} | \bu)}
  = \left( \frac{\gamma_1}{\gamma_2} \right)^\nu
\end{align}
The $L^2$ norm is computed using the same spatial quadrature, and the 
$L^\infty$ norm is approximated by taking the maximum over these spatial 
quadrature points.

In Tables~\ref{tab:sig-1}~and~\ref{tab:sig-001}, second-order convergence is 
clear in the relative entropy until the value of the relative entropy reaches 
about $10^{-17}$, which is below machine precision.
These tables include varying values of $M_0$ and $N$.
For $\sig{s} \ge 10^{-5}$, we observed second-order 
convergence for all values of $N$ up to 15 that we tried and for $M_0$ up to 
200.
For values of $M_0$ larger than 200, it is too difficult to satisfy the smaller 
optimization tolerance $\tau = 10^{-8}$.
In all cases, we observe first-order convergence in the $L^2$ norm as well as 
the $L^\infty$ norm.

For $\sig{s} = 0$, we were only able to solve the equations for smaller values 
of $M_0$ and observed second-order convergence for a smaller range of values of 
$\gamma$.
These results can be found in Table~\ref{tab:sig-0}, where we have included 
results from additional values of $\gamma$ between $10^{-9}$ and $10^{-11}$ to 
highlight the regime of second-order convergence.
In Table~\ref{tab:sig-0}, we see that the observed convergence orders increase 
monotonically to 1.99 and stay there until the value of the relative entropy 
goes below machine precision and the $L^\infty$ norm is smaller than the 
optimization tolerance $\tau = 10^{-8}$.
Indeed, at the final time, the optimizer was not able to solve many of the 
problems to the desired tolerance $\tau_d = 10^{-11}$, and in many of these 
problems, the tolerance $\tau$ is only barely fulfilled.
Therefore errors on the order of $10^{-8}$ in the $L^2$ and $L^\infty$ norms 
are not surprising, and this error of course also affects the computation of 
the relative entropy.

\begin{table}
\centering
\begin{tabular}{lcrcrcr}

$\gamma$ & $\cH_\gamma$ & $\nu$ & $L^2$ & $\nu$
         & $L^\infty$ & $\nu$ \\ \midrule

$10^{-3}$  & 3.977e-05 &   --  & 2.010e-03 &   --  & 3.939e-03 &   --  \\
$10^{-4}$  & 5.319e-07 &  1.87 & 2.115e-04 &  0.98 & 3.970e-04 &  1.00 \\
$10^{-5}$  & 5.439e-09 &  1.99 & 2.123e-05 &  1.00 & 3.948e-05 &  1.00 \\
$10^{-6}$  & 5.454e-11 &  2.00 & 2.131e-06 &  1.00 & 3.971e-06 &  1.00 \\
$10^{-7}$  & 5.504e-13 &  2.00 & 2.136e-07 &  1.00 & 3.667e-07 &  1.03 \\
$10^{-8}$  & 5.557e-15 &  2.00 & 2.147e-08 &  1.00 & 4.422e-08 &  0.92 \\
$10^{-9}$  & 5.886e-17 &  1.98 & 2.336e-09 &  0.96 & 5.671e-09 &  0.89 \\
$10^{-10}$ & 2.082e-18 &  1.45 & 3.110e-10 &  0.88 & 1.002e-09 &  0.75

\end{tabular}
\caption{Convergence test: $N = 9$, $\sig{s} = 1$, $M_0 = 100$.}
\label{tab:sig-1}
\end{table}

%
%
%

\begin{table}
\centering
\begin{tabular}{lcrcrcr}

$\gamma$ & $\cH_\gamma$ & $\nu$ & $L^2$ & $\nu$
         & $L^\infty$ & $\nu$ \\ \midrule

$10^{-3}$  & 1.196e-03 &   --  & 7.040e-03 &   --  & 2.040e-02 &   --  \\
$10^{-4}$  & 2.769e-04 &  0.64 & 2.490e-03 &  0.45 & 1.587e-02 &  0.11 \\
$10^{-5}$  & 3.442e-05 &  0.91 & 7.394e-04 &  0.53 & 6.236e-03 &  0.41 \\
$10^{-6}$  & 1.691e-06 &  1.31 & 1.336e-04 &  0.74 & 2.020e-03 &  0.49 \\
$10^{-7}$  & 2.033e-08 &  1.92 & 1.475e-05 &  0.96 & 2.168e-04 &  0.97 \\
$10^{-8}$  & 2.043e-10 &  2.00 & 1.481e-06 &  1.00 & 2.164e-05 &  1.00 \\
$10^{-9}$  & 2.130e-12 &  1.98 & 1.504e-07 &  0.99 & 2.199e-06 &  0.99 \\
$10^{-10}$ & 2.362e-14 &  1.96 & 1.569e-08 &  0.98 & 2.287e-07 &  0.98 \\
$10^{-11}$ & 2.380e-16 &  2.00 & 1.639e-09 &  0.98 & 2.415e-08 &  0.98

\end{tabular}
\caption{Convergence test: $N = 5$, $\sig{s} = 0.01$, $M_0 = 150$.}
\label{tab:sig-001}
\end{table}

%
%

\begin{table}
\centering
\begin{tabular}{lcrcrcr}

$\gamma$ & $\cH_\gamma$ & $\nu$ & $L^2$ & $\nu$
         & $L^\infty$ & $\nu$ \\ \midrule
$10^{-3}$     & 9.833e-06 &   --  & 1.112e-03 &   --  & 1.738e-03 &   --  \\
$10^{-4}$     & 8.398e-07 &  1.07 & 1.400e-04 &  0.90 & 2.146e-04 &  0.91 \\
$10^{-5}$     & 6.116e-08 &  1.14 & 1.634e-05 &  0.93 & 2.483e-05 &  0.94 \\
$10^{-6}$     & 3.136e-09 &  1.29 & 1.797e-06 &  0.96 & 2.636e-06 &  0.97 \\
$10^{-7}$     & 1.092e-10 &  1.46 & 1.885e-07 &  0.98 & 2.674e-07 &  0.99 \\
$10^{-8}$     & 2.255e-12 &  1.69 & 1.911e-08 &  0.99 & 2.679e-08 &  1.00 \\
$10^{-9}$     & 2.639e-14 &  1.93 & 1.916e-09 &  1.00 & 2.679e-09 &  1.00 \\
$10^{-9.25}$  & 8.420e-15 &  1.98 & 1.077e-09 &  1.00 & 1.509e-09 &  1.00 \\
$10^{-9.5}$   & 2.676e-15 &  1.99 & 6.060e-10 &  1.00 & 8.477e-10 &  1.00 \\
$10^{-9.75}$  & 8.499e-16 &  1.99 & 3.409e-10 &  1.00 & 4.776e-10 &  1.00 \\
$10^{-10}$    & 2.704e-16 &  1.99 & 1.919e-10 &  1.00 & 3.017e-10 &  0.80 \\
$10^{-10.25}$ & 8.711e-17 &  1.97 & 1.087e-10 &  0.99 & 1.886e-10 &  0.82 \\
$10^{-10.5}$  & 2.950e-17 &  1.88 & 6.189e-11 &  0.98 & 1.391e-10 &  0.53 \\
$10^{-10.75}$ & 9.893e-18 &  1.90 & 3.652e-11 &  0.92 & 1.182e-10 &  0.28 \\
$10^{-11}$    & 4.185e-18 &  1.49 & 2.256e-11 &  0.84 & 9.941e-11 &  0.30 \\
$10^{-12}$    & 1.801e-18 &  0.37 & 1.376e-11 &  0.21 & 6.393e-11 &  0.19

\end{tabular}
\caption{Convergence test: $N = 15$, $\sig{s} = 0$, $M_0 = 8$.}
\label{tab:sig-0}
\end{table}

\section{Conclusions}
\label{sec:conc}

The regularized entropy-based moment method for kinetic equations 
keeps many of the desirable properties of the original entropy-based moment 
method but removes the requirement that the moment vector of the solution 
remains realizable.
This facilitates the design and implementation of high-order numerical methods 
for the regularized moment equations.
However, the regularized equations require the selection of a regularization 
parameter, and the error caused by regularization needs to be accounted for and 
balanced with other error sources.
Our contribution is to rigorously prove the convergence as the regularization 
parameter goes to zero expected by formal arguments and to provide convergence 
rates.
Numerical experiments show that these rates are indeed optimal.

Our results hold for wide classes of entropy functions including the 
Maxwell--Boltzmann entropy and the Bose--Einstein entropy.
Our analysis relies on some key assumptions: The solution to the original 
moment equations needs to be Lipschitz and bounded away from the boundary of the 
set of realizable states.

Relaxing our assumptions would of course strengthen our results.
One would like to be able to work with kinetic equations with unbounded 
velocity domains, but here the original moment equations have fundamental 
problems \cite{Junk-1998,Jun00,Hauck-Levermore-Tits-2008} which remain in the 
regularized equations.
Nevertheless, the Euler equations, which are a case of the entropy-based moment 
method, do not have these problems and would be an interesting starting point 
for extending our analysis.
One would also like to allow the solution to have values arbitrarily close to or even on
the boundary of the realizable set, but not enough work has been done to 
consider the behavior of the moment equations near or on the boundary of the 
realizable set, such as in \cite{Coulombell-Goudon-2006}.
It is not even known whether the realizable set is invariant under the time 
evolution of the original entropy-based moment equations.
Finally, requiring a Lipschitz continuous solution to the limiting system is typical for relative entropy estimates, see \cite{Dafermos2016}, and in multiple space dimensions this is connected with non-uniqueness of entropy solutions for certain moment systems such as the Euler equations.

\appendix

\section{Constants}

Here we list some the constants which play the most significant roles 
throughout the paper.
Each constant is a strictly positive real number.

\begin{description}
 \item[$C_0$] Used to control the norm of a realizable moment vector using 
  its zeroth entry: ${\|\bu\| \le C_0 u_0}$ for all $\bu \in \cR$, where $u_0$ 
  is the zeroth component of $\bu$.  Introduced in \eqref{eq:C0}.
  
 \item[$C_1$] Global Lipschitz constant of $\bff$.  See 
  \eqref{eq:f-jacobian-bnd}.
  
 \item[$C_2$] Global Lipschitz constant of $\uhat \circ \alphahatg$; see 
  \eqref{eq:C2}.
  
 \item[$C_3$] $\sup_{\gamma \in (0, \gamma_0)} \|\uhat(\alphahatg(0)\|$, see
  \eqref{eq:C3}.
  This is used to get the affine bound in \eqref{eq:uhatag-bnd}.
  
 \item[$C_4$] Used when bounding $J_\gamma$ for
  $(\ug, \bu) \in \cR^L \times \cR^M$.
  Equal to $C_1 C_{h'',\max,M} + C_{\bff'', M} M C_2$; see \eqref{eq:lip-fg'}.
  
 \item[$u_M$] Upper bound on $\| \bu \|$ in $\cR^M$; see \eqref{eq:hM-uM}.
 
 \item[$h_M$] Upper bound on $h(\bu)$ in $\cR^M$; see \eqref{eq:hM-uM}.
 
 \item[$\lambda_{\min, h'', M}$] Lower bound on the smallest 
  eigenvalue of $\hg''$ over $\cR^M$ and $\gamma \in (0, \gamma_0)$; see 
  \eqref{eq:hess-bnds} and for the corresponding constant in the sublinear case 
  \eqref{eq:hess-bnds-sub}.
  
 \item[$C_{h'', M}$] Upper bound on $\|\hg''\|$ over $\cR^M$ and
  $\gamma \in (0, \gamma_0)$; see \eqref{eq:hess-bnds} and for the 
  corresponding constant in the sublinear case \eqref{eq:hess-bnds-sub}.
  
 \item[$C_{\bff'', L}$] Bound on the $\fg''$ over $\conv(\cR^L)$ and
  $\gamma \in (0, \gamma_0)$; see \eqref{eq:C-fg''}.
  
  
  
 \item[$C_{h, M, L}$] Lower bound on $\hg(\bv | \bu)$ for 
  $(\bv, \bu) \in (\R^{N + 1} \setminus \cR^L) \times \cR^M$
  see \lemref{C-M-L-positive} and Appendix~\ref{sec:C-M-L-positive}.
  
  
  
 \item[$C_{\br}$] Lipschitz constant for $\br$; see \assref{r-lip}.
\end{description}

\section{Entropy relationships}
\label{sec:mn-review}

In this appendix we quickly review the computations from \cite{Lev96} showing 
the relationships between $h$, $h_*$, $\uhat$, and $\alphahat$, in particular 
the key result that $h' = \alphahat$.
Start with the definition of the entropy $h$,
\begin{align}
 h(\bu) := \min_{g \in \bbF(V)} \left\{ \Vint{\eta(g)}
  : \Vint{\bm g} = \bu \right\},
\end{align}
i.e., the minimal value of the primal problem \eqref{eq:primal} as a function 
of the moment vector.
The corresponding Lagrangian is given by
\begin{align}
 L(g, \bsalpha) := \Vint{\eta(g)}
  + \bsalpha \cdot \left( \bu - \Vint{\bm g}
  \right),
\end{align}
and thus the dual problem is
\begin{align}\label{eq:dual-appendix}
 \max_{\bsalpha \in \R^{N + 1}} \min_{g \in \bbF(V)} L(g, \bsalpha) = 
 \max_{\bsalpha \in \R^{N + 1}} \bsalpha \cdot \bu
  - \Vint{\etad(\bsalpha \cdot \bm)},
\end{align}
cf.\ \eqref{eq:dual}, where to get this equality one takes the minimization 
inside the integral and applies the definition of the Legendre dual of $\eta$.
Because the duality gap is zero \cite[Thm.\ 16]{Hauck-Levermore-Tits-2008} we 
have
\begin{align}
 h(\bu) = \max_{\bsalpha \in \R^{N + 1}} \left\{ \bsalpha \cdot \bu
  - \Vint{\etad(\bsalpha \cdot \bm)} \right\},
\end{align}
so $h$ is the Legendre transformation of
\begin{align}
 h_*(\bsalpha) := \Vint{\etad(\bsalpha \cdot \bm)}.
\end{align}
Its derivative is readily computed:
\begin{align}
 h'_*(\bsalpha) = \Vint{\bm \etad'(\bsalpha \cdot \bm)} =: \uhat(\bsalpha).
\end{align}

Now, recall that $\alphahatu$ in \eqref{eq:dual} is defined to be the 
multiplier vector that solves the dual problem \eqref{eq:dual-appendix}.
Then the first-order necessary conditions for the dual problem imply
\begin{align}
 \uhat(\alphahatu) = \bu.
\end{align}
The reverse, i.e., $\alphahat(\uhat(\bsalpha)) = \bsalpha$, is a consequence of 
the uniqueness of the solution to the dual problem (thanks to convexity).
Thus $\alphahat$ is the inverse function of $\uhat$.
Finally, since the derivative of Legendre duals are inverses of each other, we 
have
\begin{align}
 h' = (h'_*)^{-1} = \uhat^{-1} = \alphahat.
\end{align}

\section{The regularized solution for zero vector as $\gamma \to 0$}
\label{sec:uag0}

In this section we quickly consider
\begin{align}
 \lim_{\gamma \to 0} \uhat(\alphahatg(0)),
\end{align}
where $0 \in \R^{N + 1}$.
This comes up in \secref{basic} when deriving global estimates on the function
$\uhat \circ \alphahatg$ using Lipschitz continuity.

For convenience we assume that the basis functions are orthogonal to each 
other, which since $m_0 \equiv 1$ (recall \assref{basis-functions}) in 
particular implies that $\vint{m_i} = \vint{m_0 m_i} = 0$ for all
$i \in \{1, \ldots , N\}$.
As a consequence, most of the components of $\alphahatg(0)$ are easy to 
determine.
Consider the first-order necessary conditions:
\begin{align}
 0 = \Vint{m_i \etad'(\alphahatg(0) \cdot \bm)}
 + \gamma \hat{\alpha}_{\gamma, i}(0), \qquad i \in \{0, 1, \ldots , N\}.
\end{align}
If we set $\hat{\alpha}_{\gamma, i}(0) = 0$ for $i \in \{1, \ldots , N\}$, then 
the entropy ansatz is constant in $v$, and by orthogonality of $\{m_i\}$, we 
see that the first-order necessary conditions are satisfied for
$i \in \{1, \ldots , N\}$.
It remains to determine the zeroth component $\hat{\alpha}_{\gamma, 0}(0)$, for 
which we need to solve
\begin{align}
 0 = |V| \etad'(\hat{\alpha}_{\gamma, 0}(0))
  + \gamma \hat{\alpha}_{\gamma, 0}(0).
\end{align}
From this equation, it is clear that
$\hat{\alpha}_{\gamma, 0}(0) < 0$ and thus that
$\hat{\alpha}_{\gamma, 0}(0) \to -\infty$ monotonically (recall 
\eqref{eq:ag-dec-gamma}) as $\gamma \to 0$.
The limiting value must be unbounded because $0 \nin \cR$.
Now we recall that for the $\eta$ considered in this work we have
$\range(\etad') = \dom(\eta') = (0, \infty)$, and furthermore that $\etad'$ 
is a monotonically increasing function because $\etad$ is convex.
Therefore $\etad'(\hat{\alpha}_{\gamma, 0}(0)) \to 0$ as $\gamma \to 0$.
%
%
It follows that also ${\uhat(\alphahatg(0)) \to 0}$.

\section{Proof of \lemref{C-M-L-positive}}
\label{sec:C-M-L-positive}

Let $L \in (M, \infty)$.
We want to show
\begin{align}\label{eq:C-M-L-appendix}
 C_{h, M, L} := \inf_{\substack{\bv \in \R^{N + 1} \setminus \cR^L \\
                                \bu \in \cR^M \\
                                \gamma \in (0, \gamma_0)}}
                 \hg(\bv | \bu)
             > 0.
\end{align}
The basic idea is that, by strict convexity of $\hg(\bv | \bu)$ in its first 
argument, it only achieves its minimum value, zero, when $\bv = \bu$.
But this is ruled out on
${(\bv, \bu) \in (\R^{N + 1} \setminus \cR^L) \times \cR^M}$ because
${\overline{\R^{N + 1} \setminus \cR^L} \cap \cR^M = \emptyset}$. 

We can get a more explicit bound as follows.
Let $Q \in (M, L)$.
We claim that for every $\bv \in \R^{N + 1} \setminus \cR^L$ and
$\bu \in \cR^M$ there exists a $\lambda_Q \in (0, 1)$ such that
\begin{align}\label{eq:wQ}
 \bw_Q := (1 - \lambda_Q) \bu + \lambda_Q \bv \in \cR^L \setminus \cR^Q.
\end{align}
For $\bv \in \cR \setminus \cR^L$ this straightforward, because the function
\begin{align}
 f(\lambda) = \|\alphahat((1 - \lambda) \bu + \lambda \bv)\|
\end{align}
is a continuous function with $f(0) \le M$ and $f(1) \ge L$.
For $\bv \in \R^{N + 1} \setminus \cR$, by convexity of $\cR$ there exists a 
unique $\lambda_{\cR} \in (0, 1)$ such that
$(1 - \lambda_{\cR}) \bu + \lambda_{\cR} \bv \in \partial \cR$.
But then, since $\cR^L \subset \subset \cR$, there must also be a
$\lambda_L \in (0, \lambda_{\cR})$ such that 
$(1 - \lambda_L) \bu + \lambda_L \bv \in \cR \setminus \cR^L$, and so the first 
argument can be applied again.

Now, for any $(\bw, \bu) \in (\cR^L \setminus \cR^Q) \times \cR^M$ we have
\begin{align}
 \hg(\bw | \bu)
  \stackrel{\eqref{eq:rel-ent-bnd-below-L}}{\ge}
   \lambda_{\min, h'', \widetilde L} \|\bw - \bu\|^2
  \ge \lambda_{\min, h'', \widetilde L}
   \inf_{\substack{\bw \in \overline{\cR^L \setminus \cR^Q} \\
                   \bu \in \cR^M}}
    \|\bw - \bu\|^2
  =: C_L > 0
\end{align}
where the strict positivity follows from
$\cR^M \cap \overline{\cR^L \setminus \cR^Q} = \emptyset$.

Finally, let $(\bv, \bu) \in (\R^{N + 1} \setminus \cR^L) \times \cR^M$ and
$\bw_Q$ be as in \eqref{eq:wQ}.
By convexity of the relative entropy in its first argument, we have
\begin{align}
 (1 - \lambda_Q) \hg(\bu | \bu) + \lambda_Q \hg(\bv | \bu)
  \ge \hg(\bw_Q | \bu) \ge C_L.
\end{align}
But with $\hg(\bu | \bu) = 0$ and $\lambda_Q \in (0, 1)$, we immediately have
$\hg(\bv | \bu) \ge C_L$, and by taking the infimum as in 
\eqref{eq:C-M-L-appendix} we conclude $C_{h, M, L} \ge C_L$.

\bibliographystyle{plain}
\bibliography{refs-rmn-conv}

\end{document}